\newtheorem{theorem}{Theorem}
\newtheorem{lemma}[theorem]{Lemma}
\newenvironment{proof}{\noindent{\bf Proof.}}{\hspace*{2mm}~$\square$}
\newenvironment{proofof}[1]{\noindent{\bf Proof of #1.}}{\hspace*{2mm}~$\square$}
\newcommand{\im}{\xi}
\newcommand{\imm}{X}
\newcommand{\at}{\zeta}
\newcommand{\att}{Z}
\newcommand{\N}{\mathbb{N}}
\newcommand{\Z}{\mathbb{Z}}
\newcommand{\R}{\mathbb{R}}
\newcommand{\A}{\mathscr{A}}
\newcommand{\C}{\mathscr{C}}
\newcommand{\E}{\mathscr{E}}
\newcommand{\F}{\mathscr{F}}
\newcommand{\G}{\mathscr{G}}
\newcommand{\V}{\mathscr{V}}
\renewcommand{\r}{\mathbf{r}}
\renewcommand{\d}{\mathbf{d}}
\newcommand{\ind}{\mathbf{1}}
\newcommand{\ep}{\epsilon}
\newcommand{\n}{\hspace*{-5pt}}
\DeclareMathOperator{\card}{card}
\DeclareMathOperator{\uniform}{Uniform \,}
\DeclareMathOperator{\sgn}{sgn}
\begin{document}

\begin{frontmatter}
\title     {Probability of consensus in spatial opinion \\ models with confidence threshold}
\runtitle  {Spatial opinion models with confidence threshold}
\author    {Mela Hardin and Nicolas Lanchier}
\runauthor {Mela Hardin and Nicolas Lanchier}
\address   {School of Mathematical and Statistical Sciences \\ Arizona State University \\ Tempe, AZ 85287, USA. \\ melahardin@asu.edu \\ nicolas.lanchier@asu.edu}

\maketitle

\begin{abstract} \ \
 This paper gives lower bounds for the probability of consensus for two spatially explicit stochastic opinion models.
 Both processes are characterized by two finite connected graphs, that we call respectively the spatial graph and the opinion graph.
 The former represents the social network describing how individuals interact, while the latter represents the topological structure of the opinion space.
 The representation of the opinions as a graph induces a distance between opinions which we use to measure disagreements.
 Individuals can only interact with their neighbors on the spatial graph, and each interaction results in a local change of opinion only if the
 two interacting individuals do not disagree too much, which is quantified using a confidence threshold.
 In the first model, called the imitation process, an update results in both neighbors having the exact same opinion, whereas in the second model,
 called the attraction process, an update results in the neighbors' opinions getting one unit closer.
 For both models, we derive a lower bound for the probability of consensus that holds for any finite connected spatial graph.
 For the imitation process, the lower bound for the probability of consensus also holds for any finite connected opinion graph, whereas for the attraction
 process, the lower bound only holds for a certain class of opinion graphs that includes finite integer lattices, regular trees and star-like graphs.
\end{abstract}

\begin{keyword}[class=AMS]
\kwd[Primary ]{60K35}
\end{keyword}

\begin{keyword}
\kwd{Interacting particle systems, voter model, opinion dynamics, martingale, optional stopping theorem, confidence threshold, consensus.}
\end{keyword}

\end{frontmatter}


\section{Introduction}
\label{sec:intro}
 The simplest and most popular stochastic opinion model that includes space in the form of local interactions is the voter model introduced independently
 in~\cite{clifford_sudbury_1973, holley_liggett_1975}.
 In this model, individuals are represented by the vertex set of a connected graph and are characterized by one of two possible opinions.
 Pairs of neighbors (individuals connected by an edge) interact at rate one, which results in one of the two neighbors updating her opinion by imitating
 the other neighbor. \\
\indent The behavior of the voter model starting from a product measure on (infinite) integer lattices is now well understood.
 In one and two dimensions, the model clusters, i.e., the probability that any two individuals disagree tends to zero, whereas in
 higher dimensions, coexistence occurs in that the process converges weakly to a nontrivial stationary distribution~\cite{holley_liggett_1975}.
 In one dimension, the cluster size scales like the square root of time~\cite{bramson_griffeath_1980}, while in two dimensions, there is no natural scale
 for the cluster size~\cite{cox_griffeath_1986}.
 In addition, except in one dimension, the fraction of time any given individual holds a given opinion converges almost surely to the initial density of this
 opinion~\cite{cox_griffeath_1983}.
 In particular, in dimension two, referred to as the critical dimension for the voter model, clusters keep growing indefinitely while at the same time
 individuals switch opinions frequently.
 This apparent paradox indicates in fact that clusters move faster than they grow. \\
\indent The limiting behavior of the voter model on finite connected graphs is trivial.
 In this case, because the state space is finite, the process fixates in a configuration in which all the individuals share the same opinion:
 consensus occurs with probability one.
 In addition, a direct application of the optional stopping theorem for martingales shows that the probability that a given opinion wins is simply
 equal to the fraction of individuals who initially hold this opinion.
 In contrast, for other stochastic spatial opinion models such as the Deffuant model~\cite{deffuant_al_2001, lanchier_2012b}, the vectorial version of the
 Deffuant model~\cite{deffuant_al_2001, lanchier_scarlatos_2014}, and the constrained voter model~\cite{vazquez_krapivsky_redner_2003}, where the
 number of opinions is increased and a confidence threshold preventing interactions between individuals who disagree too much is imposed, whether
 a consensus is reached or the system fixates to a fragmented configuration with multiple discordant opinions is far from being obvious.
 The goal of this paper is to study this question for the general spatial opinion model introduced in~\cite{lanchier_scarlatos_2017} and
 a natural variant of this model.
 In particular, we give nontrivial lower bounds for the probability of consensus that are uniform in all possible finite connected graphs. \vspace*{-5pt} \\


\noindent {\bf Model description.}
 The first opinion model we study in this paper is the stochastic process that was introduced by Lanchier and Scarlatos
 in~\cite{lanchier_scarlatos_2017}, while the second model is a natural variant of the process~\cite{lanchier_scarlatos_2017}.
 Like the voter model, both processes keep track of the configuration of opinions in a population of individuals.
 Our models are spatial in that the individuals are represented by the vertices of a connected graph~$\G = (\V, \E)$, with the
 presence of an edge between two vertices indicating that the two individuals can interact.
 In particular, this connected graph has to be thought of as a social network.
 The analysis in~\cite{lanchier_scarlatos_2017} focuses on the case where this connected graph is the one-dimensional integer lattice and
 individuals can only interact with their two nearest neighbors.
 In contrast, we analyze the somewhat more realistic case where the graph can be any finite connected graph.
 Our models are also stochastic in the sense that individuals update their opinion at the times of independent Poisson processes by interacting
 with a random neighbor.
 In the voter model, individuals are characterized by one of two possible opinions and each interaction results in an agreement between neighbors.
 In contrast, we assume that the set of opinions is represented by the set of vertices of another finite connected graph~$G = (V, E)$.
 In particular, for both processes, the state at time~$t$ is a spatial configuration of opinions
 $$ \im_t : \V \to V \quad \hbox{where} \quad \im_t (x) = \hbox{opinion at vertex~$x$ at time~$t$}. $$
 The main reason for representing the set of opinions as the vertex set of a graph is to use the geodesic distance on this graph to measure
 the level of disagreement between any two individuals and incorporate a {\bf confidence threshold}~$\tau \in \N$ in the models:
 we assume that an interaction between two neighbors results in one of the two individuals updating her opinion if and only if the two individuals
 hold different opinions but the distance between their opinions before the interaction does not exceed the confidence threshold~$\tau$.
 This assumption is motivated by the psychological concept of homophily, the tendency for people to be attracted to those who are similar to themselves.
 To distinguish between the graph representing the individuals' social network and the graph representing the structure of the opinion space,
 we will call from now on
 $$ \begin{array}{rcl}
      \G = (\V, \E) & \n = \n & \hbox{{\bf spatial graph}} \vspace*{4pt} \\
         G = (V, E) & \n = \n & \hbox{{\bf opinion graph}}. \end{array} $$
 To describe the dynamics, it is convenient to think of the spatial graph as a directed graph with each edge in~$\E$ inducing two directed
 edges, i.e., we define
 $$ \vec{\E} = \{\vec{xy}, \vec{yx} : \{x, y \} = \{y, x \} \in \E \} = \hbox{set of directed edges}. $$
 In both opinion models, each directed edge, say~$\vec{xy}$, becomes active at rate one, which results in an update of the opinion at vertex~$x$
 if and only if vertices~$x$ and~$y$ disagree but the distance between their opinions does not exceed the threshold~$\tau$.
 In the first model~\cite{lanchier_scarlatos_2017}, denoted by~$(\xi_t)$ from now on, the update results in vertex~$x$ imitating vertex~$y$.
 Therefore, the rate at which vertex~$x$ switches from opinion~$a$ to opinion~$a' \neq a$ given that the process is in configuration~$\im$ is
\begin{equation}
\label{eq:imitation}
  c_{a \to a'} (x, \im) = \sum_{y \in N_x} \,\ind \{\im (y) = a' \ \hbox{and} \ d (a, a') \leq \tau \}
\end{equation}
 where~$d (\cdot, \cdot)$ denotes the opinion distance, i.e., the geodesic distance on the opinion graph, and where the subset~$N_x$ is the
 interaction neighborhood of vertex~$x$ defined as
 $$ N_x = \{y \in \V : \{x, y \} \in \E \} = \{y \in \V : \vec{xy} \in \vec{\E} \}. $$
 In the second opinion model, denoted by~$(\at_t)$ from now on, rather than imitating their neighbor, individuals update their opinion so that
 it moves one unit closer to their neighbor's opinion.
 This evolution rule is reminiscent of the Axelrod
 model~\cite{axelrod_1997, lanchier_2012a, lanchier_moisson_2016, lanchier_scarlatos_2013, lanchier_schweinsberg_2012}, and the vectorial Deffuant
 model~\cite{deffuant_al_2001, lanchier_scarlatos_2014}, where each individual is characterized by a vector of cultural features, and each interaction
 results in an individual imitating only one of her neighbor's cultural features.
 To define the dynamics of the stochastic process~$(\zeta_t)$, we introduce the sets
 $$ D (a, b) = \{a' \in V : d (a, a') = 1 \ \hbox{and} \ d (a', b) = d (a, b) - 1 \} \quad \hbox{for all} \quad a \neq b. $$
 In other words, the set~$D (a, b)$ is the set of opinions that are at opinion distance one from opinion~$a$ and one unit closer to opinion~$b$.
 We point out that, because the opinion graph is assumed to be connected, this set is nonempty.
 Then, the rate at which vertex~$x$ switches from opinion~$a$ to opinion~$a' \neq a$ given that the process is in configuration~$\at$ is
\begin{equation}
\label{eq:attraction}
  c_{a \to a'} (x, \at) =
       \sum_{y \in N_x} \,\sum_{b \neq a} \ \frac{\ind \{\at (y) = b \ \hbox{and} \ d (a, b) \leq \tau \ \hbox{and} \ a' \in D (a, b) \}}{\card D (a, b)}.
\end{equation}
 The local transition rates indicate that, each time~$\vec{xy}$ becomes active and the two neighbors are compatible, the new opinion at vertex~$x$ is
 chosen uniformly at random from the set of opinions that are adjacent to the previous opinion at vertex~$x$ and one unit closer to the opinion at
 vertex~$y$.
 From now on, we call {\bf imitation process}, respectively, {\bf attraction process}, the process described by the local transition
 rates~\eqref{eq:imitation}, respectively, \eqref{eq:attraction}.
 As previously mentioned, the imitation rule was introduced in~\cite{lanchier_scarlatos_2017} where the authors studied fluctuation and fixation of
 the process when the spatial graph is the one-dimensional integer lattice.
 Although the current paper is the first paper studying the attraction rule, the attraction process was invented a couple of years ago by~Lanchier
 and~Scarlatos during their discussions while they were writing~\cite{lanchier_scarlatos_2017}. \vspace*{-5pt} \\


\noindent {\bf Main results.}
 Our main results give lower bounds for the probability of consensus for both processes.
 These lower bounds are uniform over all possible finite connected spatial graphs.
 For the imitation process, these lower bounds also hold for all possible finite connected opinion graphs, whereas for the attraction process,
 the result only holds when the opinion graph satisfies a certain set of inequalities that we shall call eccentricity inequalities.
 To define mathematically what is meant by consensus, we first define the {\bf time to fixation} as
 $$ T = \sup \{t : \im_t \neq \im_{t-} \} = \inf \{t : \im_t = \im_s \ \hbox{for all} \ s > t \}, $$
 the last time at which the imitation process is updated.
 The time to fixation under the attraction rule is defined similarly, replacing~$\im$ by~$\at$.
 We will prove that the time to fixation is an almost surely finite stopping time for the natural filtration of the opinion model.
 Then, we define {\bf consensus} as the event that, at the time to fixation, all the individuals share the same opinion:
 $$ \{\mathrm{consensus} \} = \{\im_T \equiv \hbox{cst} \} = \{\im_T \equiv c \ \hbox{for some} \ c \in V \}. $$
 Again, the consensus event under the attraction rule is defined similarly, replacing~$\im$ by~$\at$.
 To state our results, recall that the {\bf eccentricity} of a vertex~$a \in V$, and the {\bf radius}~$\r$ and {\bf diameter}~$\d$ of the opinion
 graph are defined respectively as
 $$ \ep (a) = \max_{b \in V} \,d (a, b), \quad \r = \min_{a \in V} \,\ep (a) \quad \hbox{and} \quad \d = \max_{a \in V} \,\ep (a). $$
 When the confidence threshold~$\tau$ is at least equal to the diameter~$\d$, all the individuals can interact with their neighbors just like in
 the voter model, which always results in a consensus.
\begin{theorem} --
\label{th:consensus}
 Assume~$\tau \geq \d$.
 Then, $P (\im_T \equiv \mathrm{cst}) = P (\at_T \equiv \mathrm{cst}) = 1$.
\end{theorem}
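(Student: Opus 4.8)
The plan is to work entirely at the level of the underlying finite Markov chain. Since the spatial graph~$\G$ and the opinion graph~$G$ are both finite, the imitation process~$(\im_t)$ and the attraction process~$(\at_t)$ are continuous-time Markov chains on the finite set of all maps~$\V \to V$. The first observation is that the hypothesis~$\tau \geq \d$ makes the confidence threshold vacuous: since~$d (a, b) \leq \d \leq \tau$ for every pair of opinions~$a, b \in V$, an interaction along an active directed edge~$\vec{xy}$ results in an update as soon as the two endpoints disagree. Consequently a configuration fails to be absorbing if and only if some edge of~$\G$ joins two vertices with different opinions, so by connectedness of~$\G$ the absorbing configurations are exactly the consensus configurations (which are trivially absorbing). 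Because the chain is finite, it is absorbed almost surely as soon as one knows that from every configuration there is a finite chain of positive-rate transitions leading to an absorbing configuration --- indeed, the communicating class of a recurrent state is closed, so any recurrent state that can reach an absorbing state must equal it, hence the chain almost surely reaches the set of consensus configurations. The whole proof thus reduces to exhibiting, from an arbitrary configuration, a positive-probability path to a consensus.

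For the imitation process this is immediate. Starting from a configuration~$\im$, fix a vertex~$x_0$, let~$c = \im (x_0)$, and note that as long as the opinion~$c$ has not invaded the whole graph there is, by connectedness of~$\G$, an edge~$\{x, y\} \in \E$ with current opinions~$c$ at~$x$ and something~$\neq c$ at~$y$; activating~$\vec{yx}$ lets~$y$ imitate~$x$, a transition of positive rate since~$d (\im (y), c) \leq \tau$. Each such step increases the number of vertices holding~$c$ by one and never decreases it, so after at most~$\card \V - 1$ steps we reach the consensus in which every vertex holds opinion~$c$.

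For the attraction process I would proceed in phases along a spanning-tree order: enumerate the vertices~$x_1, \dots, x_n$ so that each~$x_i$ with~$i \geq 2$ is adjacent in~$\G$ to some~$x_j$ with~$j < i$ (possible since~$\G$ is connected), and set~$c = \at (x_1)$. Assuming inductively that~$x_1, \dots, x_{i-1}$ already hold opinion~$c$, choose~$j < i$ with~$\vec{x_i x_j} \in \vec{\E}$, fix a geodesic~$a_0 = \at (x_i), a_1, \dots, a_m = c$ in the opinion graph, and activate~$\vec{x_i x_j}$ repeatedly, forcing at the~$k$-th activation the new opinion~$a_k$ at~$x_i$; this is legitimate because~$a_k \in D (a_{k-1}, c)$ and~$d (a_{k-1}, c) \leq \tau$, so the corresponding transition has rate at least~$1 / \card D (a_{k-1}, c) > 0$. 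After~$m = d (\at (x_i), c)$ activations vertex~$x_i$ holds~$c$, and since each of these transitions changes the opinion at~$x_i$ only, the vertices~$x_1, \dots, x_{i-1}$ still hold~$c$. After~$n - 1$ phases every vertex holds opinion~$c$.

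Combining either construction with the reduction of the first paragraph yields~$P (\im_T \equiv \mathrm{cst}) = P (\at_T \equiv \mathrm{cst}) = 1$ and, along the way, that~$T$ is almost surely finite, since the process cannot move once it has hit a consensus. The one step that needs a little care is the attraction construction: one must commit to a single geodesic toward~$c$ so that the prescribed update of~$x_i$ --- rather than some competing element of~$D (a_{k-1}, c)$ --- carries positive probability, and one must check that activating~$\vec{x_i x_j}$ never disturbs the vertices already set to~$c$; both facts are read off directly from the rates~\eqref{eq:attraction}.
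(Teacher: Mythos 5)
Your proposal is correct, but it reaches the conclusion by a somewhat different and more self-contained route than the paper. The paper first builds machinery valid for every threshold: it characterizes the absorbing states as the configurations with no compatible edges (Lemma~\ref{lem:absorbing}), proves that the fixation time~$T$ is an almost surely finite stopping time by showing that from any non-absorbing configuration one can reach a configuration with strictly fewer compatible edges (Lemmas~\ref{lem:decrease} and~\ref{lem:finite}), and then proves Lemma~\ref{lem:consensus} (an absorbed configuration in which some vertex has opinion of eccentricity at most~$\tau$ must be a consensus); Theorem~\ref{th:consensus} then follows in two lines because~$\tau \geq \d$ forces every eccentricity to be at most~$\tau$. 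You instead exploit from the start that~$\tau \geq \d$ makes the threshold vacuous, so absorbing configurations coincide with consensus configurations by connectedness of~$\G$, and you obtain almost sure absorption by exhibiting explicit positive-probability paths all the way to a consensus (grow one opinion along edges for imitation; move each vertex along a fixed geodesic by repeated activations of a single directed edge for attraction, which is the same device as in the paper's Lemma~\ref{lem:decrease}, where in fact any choice in~$D(a_{k-1},c)$ would do since the distance to~$c$ drops by one regardless), concluding with the standard finite-chain fact that if every state can reach an absorbing state then all recurrent states are absorbing. What the paper's longer route buys is reusability: the fixation-time lemmas and Lemma~\ref{lem:consensus} are exactly what is needed again for Theorems~\ref{th:imitation} and~\ref{th:attraction}, where~$\tau < \d$ and one cannot hope to build a path to consensus from an arbitrary configuration. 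What your route buys is a shorter, elementary proof of this particular theorem that needs none of the eccentricity or compatible-edge bookkeeping; the only price is that the argument is specific to~$\tau \geq \d$.
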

 We now study the two spatial opinion models in the nontrivial case where~$\tau < \d$.
 Our analysis, however, only holds when~$\tau \geq \r$.
 For each process, we first give a general lower bound that applies to all possible spatial graphs and all possible initial configurations, and
 then give a more explicit expression of the lower bound when the opinions at different vertices are initially independent and equally likely, i.e.,
 for all~$\V' \subset \V$ and all~$(a_x)_{x \in \V'} \subset V$, we have
 $$ \begin{array}{rcl}
      P (\im_0 (x) = a_x \ \hbox{for all} \ x \in \V') & \n = \n & P (\at_0 (x) = a_x \ \hbox{for all} \ x \in \V') \vspace*{4pt} \\
                                                       & \n = \n & (1 / \card (V))^{\card (\V')}. \end{array} $$
 This initial distribution is referred to as the {\bf uniform product measure} later.
 To find a lower bound for the probability of consensus under the imitation rule, we will prove that the process~$(\imm_t)$ that keeps track of the
 number of individuals whose opinion's eccentricity does not exceed the confidence threshold is a martingale.
 An application of the optional stopping theorem to this process stopped at the time to fixation gives the following lower bound.
\begin{theorem}[imitation] --
\label{th:imitation}
 Assume~$\tau \in [\r, \d)$. Then,
 $$ P (\im_T \equiv \mathrm{cst}) \geq \frac{E (\card \{x \in \V : \ep (\im_0 (x)) \leq \tau \})}{\card (\V)}. $$
 In particular, starting from the uniform product measure,
 $$ P (\im_T \equiv \mathrm{cst}) \geq \frac{\card \{a \in V : \ep (a) \leq \tau \}}{\card (V)} > 0. $$
\end{theorem}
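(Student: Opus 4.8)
The plan is to track the integer-valued process
$$ \imm_t = \card \{x \in \V : \ep (\im_t (x)) \leq \tau \}, $$
the number of individuals whose current opinion has eccentricity at most the confidence threshold, and to show that $(\imm_t)$ is a bounded martingale for the natural filtration of the imitation process. Granting this, and using that the time to fixation $T$ is an almost surely finite stopping time for the natural filtration (a fact proved separately), the optional stopping theorem for bounded martingales gives $E (\imm_T) = E (\imm_0)$. The proof is then completed by checking that the event $\{\imm_T > 0 \}$ is contained in the consensus event, since then, using that $\imm_T \leq \card (\V)$,
$$ P (\im_T \equiv \mathrm{cst}) \geq P (\imm_T > 0) \geq \frac{E (\imm_T)}{\card (\V)} = \frac{E (\imm_0)}{\card (\V)} = \frac{E (\card \{x \in \V : \ep (\im_0 (x)) \leq \tau \})}{\card (\V)}. $$

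To prove the martingale property, I would compute the action of the infinitesimal generator $\mathscr{L}$ of the imitation process on the function $g (\xi) = \card \{x \in \V : \ep (\xi (x)) \leq \tau \}$, so that $\imm_t = g (\im_t)$. A transition along the directed edge $\vec{xy}$ is triggered only when $a = \xi (x)$ and $b = \xi (y)$ satisfy $a \neq b$ and $d (a, b) \leq \tau$, in which case $x$ imitates $y$ and $g$ changes by $\ind \{\ep (b) \leq \tau \} - \ind \{\ep (a) \leq \tau \}$. The key point is that the reversed directed edge $\vec{yx}$ is active under exactly the same (symmetric) condition $a \neq b$, $d (a, b) \leq \tau$, fires at the same rate one, and produces the opposite increment $\ind \{\ep (a) \leq \tau \} - \ind \{\ep (b) \leq \tau \}$; hence the contributions of $\vec{xy}$ and $\vec{yx}$ to $\mathscr{L} g (\xi)$ cancel. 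Summing over all unordered edges gives $\mathscr{L} g \equiv 0$, so $\imm_t = g (\im_t)$ is a martingale, and it is bounded by $\card (\V)$.

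The structural input for the last step is the trivial inequality $d (a, b) \leq \ep (a)$ for all $a, b \in V$, immediate from the definition of the eccentricity. Since each directed edge fires infinitely often almost surely, the configuration $\im_T$ must be \emph{frozen}: for every edge $\{x, y \} \in \E$ either $\im_T (x) = \im_T (y)$ or $d (\im_T (x), \im_T (y)) > \tau$, for otherwise this edge would become active after time $T$, contradicting the definition of $T$. Now suppose $\imm_T > 0$ and let $x$ be a vertex with $\ep (\im_T (x)) \leq \tau$. For any neighbor $y$ of $x$, if $\im_T (y) \neq \im_T (x)$ then $d (\im_T (x), \im_T (y)) \leq \ep (\im_T (x)) \leq \tau$, contradicting the fact that $\im_T$ is frozen; hence $\im_T (y) = \im_T (x)$, and in particular $\ep (\im_T (y)) \leq \tau$ as well. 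Iterating along paths in the connected spatial graph $\G$ shows that $\im_T$ is constant, i.e., $\{\imm_T > 0 \} \subseteq \{\im_T \equiv \mathrm{cst} \}$, which closes the chain of inequalities in the first paragraph.

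I expect the only genuine obstacle to lie in the rigorous handling of the optional stopping step, namely that $T$ is a stopping time for the natural filtration and is almost surely finite, so that $\imm_T = \lim_{t \to \infty} \imm_t$ and $E (\imm_T) = E (\imm_0)$; once this is in place, both the generator computation and the frozen-configuration argument are routine. Finally, for the explicit bound, under the uniform product measure the opinions $(\im_0 (x))_{x \in \V}$ are i.i.d.\ uniform on $V$, so
$$ E (\imm_0) = \sum_{x \in \V} P (\ep (\im_0 (x)) \leq \tau) = \card (\V) \cdot \frac{\card \{a \in V : \ep (a) \leq \tau \}}{\card (V)}, $$
which yields $P (\im_T \equiv \mathrm{cst}) \geq \card \{a \in V : \ep (a) \leq \tau \} / \card (V)$; this quantity is strictly positive because $\tau \geq \r$ forces the existence of at least one opinion whose eccentricity equals $\r \leq \tau$.
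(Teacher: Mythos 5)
Your proposal is correct and follows essentially the same route as the paper: the bounded martingale $\imm_t = \card \{x \in \V : \ep (\im_t (x)) \leq \tau \}$, optional stopping at the almost surely finite fixation time $T$, the observation that $\im_T$ has no compatible edges, and the connectedness argument showing that one vertex with $\ep (\im_T (x)) \leq \tau$ forces consensus. The only (harmless) deviation is that you bypass the paper's intermediate lemma that $\imm_T \in \{0, \card (\V)\}$ by using the cruder bound $E (\imm_T) \leq \card (\V) \, P (\imm_T > 0)$, which yields the same inequality.
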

 To find a lower bound for the probability of consensus under the attraction rule, we will study a process~$(\att_t)$ that keeps track of the
 eccentricity of the individuals' opinions.
 We will prove that, whenever the opinion graph satisfies the inequalities
 \begin{equation}
\label{eq:ecc-ineq}
  \sum_{a' \in D (a, b)} \frac{\ep (a') - \ep (a)}{\card D (a, b)} + \sum_{b' \in D (b, a)} \frac{\ep (b') - \ep (b)}{\card D (b, a)} \leq 0
  \quad \hbox{for all} \quad a, b \in V,
\end{equation}
 that we shall call {\bf eccentricity inequalities}, interactions among individuals tend to decrease the eccentricity of their opinion, which
 translates mathematically into the fact that~$(\att_t)$ is a supermartingale.
 The lower bound for the probability of consensus again follows from an application of the optional stopping theorem to the process stopped at
 the fixation time.
 More precisely, for all opinion graphs that satisfy~\eqref{eq:ecc-ineq}, we have the following lower bound.
\begin{theorem}[attraction] --
\label{th:attraction}
 Assume~$\tau \in [\r, \d)$ and~\eqref{eq:ecc-ineq}. Then,
 $$ P (\at_T \equiv \mathrm{cst}) \geq 1 - \frac{1}{\card (\V)} \sum_{x \in \V} \bigg(\frac{E (\ep (\at_0 (x))) - \r}{\tau + 1 - \r} \bigg). $$
 In particular, starting from the uniform product measure,
 $$ P (\at_T \equiv \mathrm{cst}) \geq 1 - \frac{1}{\card (V)} \ \sum_{a \in V} \bigg(\frac{\ep (a) - \r}{\tau + 1 - \r} \bigg). $$
\end{theorem}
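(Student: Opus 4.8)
The plan is to control the total eccentricity $\att_t = \sum_{x \in \V} \ep(\at_t(x))$ of the configuration and extract the bound from the optional stopping theorem. First I would compute the action of the generator of the Markov chain $(\at_t)$ on $\att$. Inserting the rates~\eqref{eq:attraction} and using that $\sum_{x \in \V} \sum_{y \in N_x} = \sum_{\vec{xy} \in \vec{\E}}$, the drift of $\att_t$ in a configuration $\at$ simplifies (the sum over $b \neq a$ collapses to $b = \at(y)$, and $a' \in D(a,b)$ already forces $a' \neq a$) to
$$ \sum_{\vec{xy} \in \vec{\E}} \ind\{\at(x) \neq \at(y),\ d(\at(x), \at(y)) \leq \tau\} \sum_{a' \in D(\at(x), \at(y))} \frac{\ep(a') - \ep(\at(x))}{\card D(\at(x), \at(y))}. $$
I would then pair the two opposite directed edges $\vec{xy}$ and $\vec{yx}$ coming from one undirected edge $\{x,y\} \in \E$: their joint contribution vanishes when $\at(x) = \at(y)$ or $d(\at(x), \at(y)) > \tau$, and otherwise is exactly the left-hand side of the eccentricity inequality~\eqref{eq:ecc-ineq} with $a = \at(x)$, $b = \at(y)$, hence is nonpositive. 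Thus the drift is $\leq 0$ in every configuration, so by Dynkin's formula $(\att_t)$ is a supermartingale; it is bounded, $0 \leq \att_t \leq \d\,\card(\V)$, because $V$ is finite.

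Next, granting the (separately established) fact that $T$ is an almost surely finite stopping time for the natural filtration, I would apply the optional stopping theorem to the bounded supermartingale $(\att_{t \wedge T})$: it converges almost surely to $\att_T$, so bounded convergence gives $E(\att_T) \leq E(\att_0) = \sum_{x \in \V} E(\ep(\at_0(x)))$. The deterministic input that makes this useful is a structural claim: on the complement of consensus, $\ep(\at_T(x)) \geq \tau + 1$ for \emph{every} $x \in \V$. Indeed $\at_T$ is an absorbing configuration, so any two neighbors carrying distinct opinions lie at opinion distance $> \tau$; given $x$ with $\at_T(x) = a$, the connected component of $\{z \in \V : \at_T(z) = a\}$ containing $x$ is a proper subset of $\V$ (otherwise $\at_T \equiv a$, a consensus), so it has an edge in $\G$ to a vertex carrying some opinion $c \neq a$, and therefore $\ep(a) \geq d(a, c) \geq \tau + 1$. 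Since $\ep(a) \geq \r$ for all $a \in V$, writing $p = P(\at_T \equiv \mathrm{cst})$ we get $E(\att_T) \geq \card(\V)\,\r\,p + \card(\V)(\tau + 1)(1 - p) = \card(\V)\,\r + \card(\V)(\tau + 1 - \r)(1 - p)$. Combining with $E(\att_T) \leq \sum_x E(\ep(\at_0(x)))$ and solving for $p$ (legitimate since $\tau \geq \r$ makes $\tau + 1 - \r \geq 1$) yields the first inequality; substituting $E(\ep(\at_0(x))) = \frac{1}{\card(V)} \sum_{a \in V} \ep(a)$ under the uniform product measure yields the second.

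I expect the main obstacles to be the two bookkeeping steps above rather than anything conceptual: getting the generator computation and the symmetrization over directed edges exactly right so that the per-edge term matches the left side of~\eqref{eq:ecc-ineq} (the normalization $1/\card D(a,b)$ must be tracked carefully), and the structural lemma on frozen non-consensus configurations, which is what upgrades the trivial bound $\att_T \geq \r\,\card(\V)$ to one that can exceed $E(\att_0)$. A minor point worth checking separately is that $T$ is indeed a stopping time — it can be written as the hitting time of the (finite) set of absorbing configurations, namely those in which every spatial edge joins either equal opinions or opinions at distance $> \tau$.
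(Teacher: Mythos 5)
Your proposal is correct and follows essentially the same route as the paper: you form the total-eccentricity process, show it is a bounded supermartingale by pairing the two directed edges of each spatial edge and invoking the eccentricity inequalities~\eqref{eq:ecc-ineq}, apply optional stopping at the almost surely finite fixation time, and use the structural fact that in a frozen non-consensus configuration every opinion has eccentricity at least $\tau+1$ (the paper's Lemma~\ref{lem:consensus}, which you prove in contrapositive form via the boundary edge of a same-opinion cluster). The only differences are cosmetic: you work with the unshifted sum $\sum_x \ep(\at_t(x))$ rather than the paper's $\sum_x(\ep(\at_t(x))-\r)$, and you condition on consensus versus non-consensus rather than on the events $A$ and $B$ of the paper's Lemma~\ref{lem:lower-bound}.
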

 Note that the lower bounds in Theorems~\ref{th:imitation} and~\ref{th:attraction} are universal in the sense that they do not depend on
 the topology of the spatial graph.
 Note also that, at least starting from the uniform product measure and when~$\tau \geq \r$, the lower bound in Theorem~\ref{th:imitation} is
 always positive.
 Theorem~\ref{th:attraction}, however, can lead to trivial lower bounds that are negative.
 To show that the theorem indeed implies that consensus occurs with positive probability in nontrivial cases where~$\tau < \d$, we now apply
 the theorem to specific opinion graphs.
\begin{figure}[t]
\centering
\scalebox{0.50}{\input{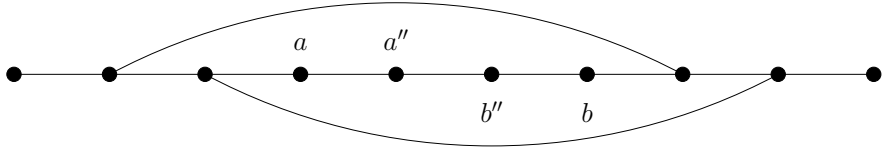}}
\caption{\upshape{Example of a graph for which~\eqref{eq:ecc-ineq} is not satisfied.
                  In this example, the sets~$D (a, b)$ and~$D (b, a)$ reduce to the singletons~$\{a'' \}$ and~$\{b'' \}$, respectively,
                  and~$\ep (a) = \ep (b) = 3 < 4 = \ep (a'') = \ep (b'')$.}}
\label{fig:counter-ex}
\end{figure}
 Before looking at these particular graphs, we point out that there exist graphs that do not satisfy~\eqref{eq:ecc-ineq}, and we refer
 to Figure~\ref{fig:counter-ex} for a counter-example. \vspace*{5pt} \\
\noindent {\bf Lattices}.
 To begin with, we assume that the set of opinions is
\begin{equation}
\label{eq:lattice}
  V = \bigg(\prod_{i = 1}^n \ [- L_i, L_i] \bigg) \cap \Z^n \quad \hbox{where} \quad L_i \in \N^*
\end{equation}
 which we turn into a graph by connecting two vertices by an edge if and only if they are at~Euclidean distance one apart.
 See the first picture in Figure~\ref{fig:graphs} for an illustration.
 In this case, each opinion can be seen as a culture, a vector of~$n$ cultural features that assume different states, just like in the
 Axelrod model~\cite{axelrod_1997}.
 After proving that the lattice satisfies~\eqref{eq:ecc-ineq}, an application of Theorem~\ref{th:attraction} gives the following lower bound
 for the probability of consensus.
\begin{figure}[t]
\centering
\scalebox{0.42}{\input{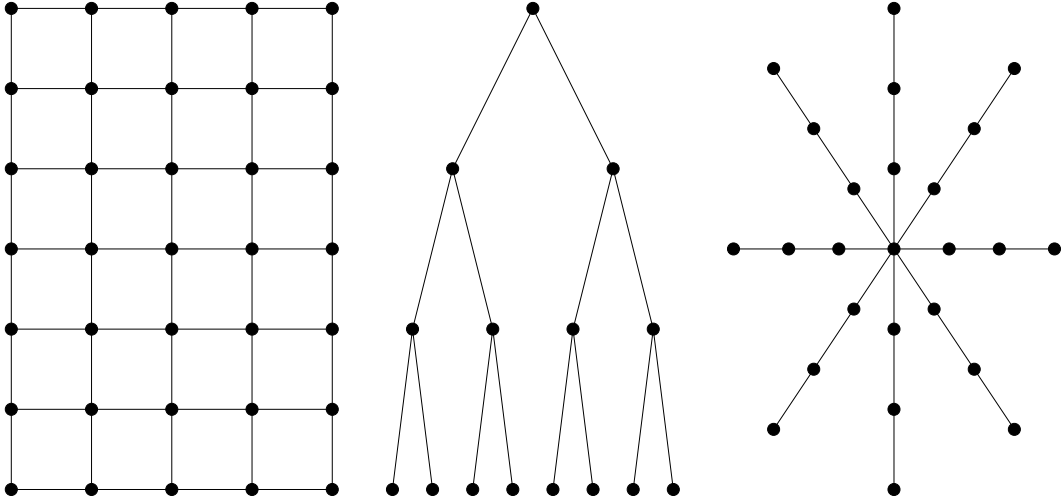}}
\caption{\upshape{Picture of the graphs in Theorems~\ref{th:lattice}--\ref{th:star}.}}
\label{fig:graphs}
\end{figure}
\begin{theorem}[lattices] --
\label{th:lattice}
 Assume that~$\tau \in [\r, 2 \r)$, that~$G$ is the lattice with vertex set~\eqref{eq:lattice} and that the process starts from the uniform
 product measure. Then,
 $$ P (\at_T \equiv \mathrm{cst}) \geq 1 - \bigg(\frac{1}{\tau + 1 - \r} \bigg) \sum_{i = 1}^n \ \frac{L_i (L_i + 1)}{2L_i + 1} $$
 where~$\r = L_1 + L_2 + \cdots + L_n$.
\end{theorem}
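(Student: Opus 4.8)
The plan is to verify the hypotheses of Theorem~\ref{th:attraction} for the box lattice and then evaluate the resulting bound, so the work splits into three parts: first I would compute the radius and the eccentricity function of~$G$, then check the eccentricity inequalities~\eqref{eq:ecc-ineq}, and finally carry out the expectation under the uniform product measure.

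\textbf{Step 1 (geometry of the box lattice).} The graph distance on~$G$ is the $\ell^1$ distance restricted to~$V$, so for $a = (a_1, \dots, a_n) \in V$ the farthest vertex is the opposite corner $(- \sgn (a_1) L_1, \dots, - \sgn (a_n) L_n)$, which gives
$$ \ep (a) = \sum_{i = 1}^n (L_i + |a_i|) = \r + \sum_{i = 1}^n |a_i|, \qquad \hbox{where} \quad \r = \ep (\mathbf 0) = L_1 + \cdots + L_n. $$
Since~$\ep$ is maximized at any corner, $\d = 2 \r$, so the assumption $\tau \in [\r, 2 \r)$ is exactly $\tau \in [\r, \d)$, which is the hypothesis of Theorem~\ref{th:attraction}. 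Keeping $\ep = \r + \sum_i | \cdot |$ in this form makes both~\eqref{eq:ecc-ineq} and the final computation transparent.

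\textbf{Step 2 (eccentricity inequalities).} A neighbor of~$a$ on the lattice differs from~$a$ by~$\pm 1$ in a single coordinate, and it is one unit closer to~$b$ precisely when the move is made in a coordinate~$i$ with~$a_i \neq b_i$ and in the direction~$\sgn (b_i - a_i)$. Hence, writing~$e_i$ for the $i$-th unit vector,
$$ D (a, b) = \{a + \sgn (b_i - a_i)\, e_i : a_i \neq b_i \}, \qquad \card D (a, b) = \card D (b, a) = \# \{i : a_i \neq b_i \} =: k. $$
Using $\ep = \r + \sum_i | \cdot |$, the two sums in~\eqref{eq:ecc-ineq} have the common positive denominator~$k$, so it suffices to prove, for every coordinate~$i$ with~$a_i \neq b_i$, the one-dimensional inequality
$$ |a_i + \sgn (b_i - a_i)| + |b_i + \sgn (a_i - b_i)| \leq |a_i| + |b_i|; $$
summing this over the~$k$ relevant coordinates and dividing by~$k$ then yields~\eqref{eq:ecc-ineq}. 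Assuming without loss of generality~$a_i < b_i$, the inequality reads $|a_i + 1| + |b_i - 1| \leq |a_i| + |b_i|$, which follows from a short case analysis on the signs of~$a_i$, $b_i$, $a_i + 1$, $b_i - 1$ (one gets equality except when~$a_i < 0 < b_i$, where the left side is smaller by~$2$). This elementary but case-heavy verification is the only real obstacle; everything else is bookkeeping.

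\textbf{Step 3 (expectation under the uniform product measure).} With~\eqref{eq:ecc-ineq} established, Theorem~\ref{th:attraction} applies, and under the uniform product measure it gives
$$ P (\at_T \equiv \mathrm{cst}) \geq 1 - \frac{1}{\tau + 1 - \r} \cdot \frac{1}{\card (V)} \sum_{a \in V} (\ep (a) - \r) = 1 - \frac{1}{\tau + 1 - \r} \cdot \frac{1}{\card (V)} \sum_{a \in V} \ \sum_{i = 1}^n |a_i|. $$
Since~$V = \prod_i ([-L_i, L_i] \cap \Z)$ and $\card (V) = \prod_i (2 L_i + 1)$, the sum factorizes coordinate by coordinate,
$$ \frac{1}{\card (V)} \sum_{a \in V} |a_i| = \frac{1}{2 L_i + 1} \ \sum_{k = -L_i}^{L_i} |k| = \frac{L_i (L_i + 1)}{2 L_i + 1}, $$
and summing over~$i = 1, \dots, n$ produces the claimed lower bound.
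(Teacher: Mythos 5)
Your proposal is correct and follows essentially the same route as the paper: establish the eccentricity inequalities for the box lattice via the coordinate-wise identity $\ep(a) = \r + \sum_i |a_i|$ and a per-coordinate case analysis (your combined inequality $|a_i+1| + |b_i-1| \leq |a_i| + |b_i|$ is just a repackaging of the paper's observation that the two $\pm 1$ increments cannot both equal $+1$), then apply Theorem~\ref{th:attraction} and compute $E|U_i| = L_i(L_i+1)/(2L_i+1)$ coordinate by coordinate. No gaps.
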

 For instance, when~$L_i \equiv L$, the radius is~$\r = nL$, and some basic algebra shows that the lower bound in the theorem is
 nontrivial (strictly positive) if and only if
\begin{equation}
\label{eq:cube}
  \tau \geq \bigg\lfloor \r - 1 + \frac{nL (L + 1)}{2L + 1} \bigg\rfloor + 1
\end{equation}
 where~$\lfloor x \rfloor$ denotes the integer part of~$x \in \R$.
 Similarly, when the dimension~$n = 2$, the lower bound in the theorem is nontrivial (strictly positive) if and only if
\begin{equation}
\label{eq:rect}
  \tau \geq \bigg\lfloor \r - 1 + \frac{L_1 (L_1 + 1)}{2L_1 + 1} + \frac{L_2 (L_2 + 1)}{2L_2 + 1} \bigg\rfloor + 1.
\end{equation}
 The right-hand sides of~\eqref{eq:cube} and~\eqref{eq:rect} are computed in Tables~\ref{tab:graphs}.a and~\ref{tab:graphs}.b, respectively,
 for various values of the parameters of the lattice.
 Note that the numbers in the first, respectively, second, table are significantly less than~$\d = 2nL$, respectively, $\d = 2L_1 + 2L_2$.
 This shows that there is a wide range of confidence thresholds less than the diameter for which the bound in the theorem is positive, i.e., there is
 a positive probability of consensus in a number of nontrivial cases. \vspace*{5pt} \\
\noindent {\bf Trees}.
 We now assume that the opinion graph consists of the regular rooted tree with degree~$n$ and radius~$r$, and we refer to the second
 picture in Figure~\ref{fig:graphs} for an illustration of the opinion graph that has~$n = 2$ and~$r = 3$.
 In this case, the root plays the role of a ``centrist'' opinion, while the leaves are ``extremist'' opinions.
 Like lattices, trees satisfy the eccentricity inequalities so we can apply Theorem~\ref{th:attraction}, which gives the following lower bound.
\begin{theorem}[trees] --
\label{th:tree}
 Assume that~$\tau \in [r, 2r)$, that~$G$ is the regular rooted tree with degree~$n \geq 2$ and radius~$r$, and that the process starts from the
 uniform product measure. Then,
 $$ \begin{array}{rcl}
      P (\at_T \equiv \mathrm{cst}) & \n \geq \n &
         \displaystyle 1 - \bigg(\frac{1}{\tau + 1 - r} \bigg) \bigg(\frac{n + 2n^2 + \cdots + rn^r}{1 + n + n^2 + \cdots + n^r} \bigg) \vspace*{8pt} \\ & \n = \n &
         \displaystyle 1 - \bigg(\frac{1}{\tau + 1 - r} \bigg) \ \frac{n (rn^{r + 1} - (r + 1) \,n^r + 1)}{(1 - n)(1 - n^{r + 1})}. \end{array} $$
\end{theorem}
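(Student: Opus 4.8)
The strategy is to specialize Theorem~\ref{th:attraction} to the tree, so the proof has two ingredients: verifying the eccentricity inequalities~\eqref{eq:ecc-ineq} for this opinion graph, and computing the radius~$\r$, the order~$\card(V)$, and the sum~$\sum_{a \in V} \ep(a)$ that enter the bound. Under the uniform product measure, $E(\ep(\at_0(x)))$ is the average eccentricity~$\card(V)^{-1} \sum_{a \in V} \ep(a)$, as already recorded in the statement of Theorem~\ref{th:attraction}, so this is all that is needed.

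First I would pin down the geometry. Let~$\ell(a) \in \{0,1,\ldots,r\}$ denote the level of a vertex~$a$, i.e., its distance to the root; there are exactly~$n^k$ vertices at level~$k$, so~$\card(V) = 1 + n + \cdots + n^r$. For any~$a$, a vertex at maximal distance from~$a$ is a leaf lying in a subtree not containing~$a$, and since~$n \geq 2$ the optimal such vertex is reached by climbing all the way up to the root and then descending to a leaf in a different child-subtree; this yields~$\ep(a) = \ell(a) + r$ for every~$a$ (for the root, $\ep = r$). In particular~$\r = \min_a \ep(a) = r$ and~$\d = \max_a \ep(a) = 2r$, so the hypothesis~$\tau \in [r, 2r)$ is precisely~$\tau \in [\r, \d)$, as required by Theorem~\ref{th:attraction}.

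Next, the eccentricity inequalities. Because~$G$ is a tree there is a unique self-avoiding path between any two vertices, so for all~$a \neq b$ the set~$D(a, b)$ is the singleton whose element is the unique neighbor of~$a$ on the~$a$-to-$b$ path; hence~$\card D(a, b) = 1$ and~\eqref{eq:ecc-ineq} reduces to~$(\ep(a') - \ep(a)) + (\ep(b') - \ep(b)) \leq 0$, with~$a', b'$ these neighbors. Using~$\ep(\cdot) = \ell(\cdot) + r$, this is a short case check. If neither of~$a, b$ is an ancestor of the other, then the~$a$-to-$b$ path leaves~$a$ through its parent and leaves~$b$ through its parent, so~$\ell(a') = \ell(a) - 1$ and~$\ell(b') = \ell(b) - 1$, and the left-hand side equals~$-2$. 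If instead, say,~$a$ is a strict ancestor of~$b$, then~$a'$ is the child of~$a$ pointing toward~$b$ while~$b'$ is the parent of~$b$, so~$\ell(a') = \ell(a) + 1$ and~$\ell(b') = \ell(b) - 1$, and the left-hand side equals~$0$. Thus~\eqref{eq:ecc-ineq} holds and Theorem~\ref{th:attraction} applies.

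It remains to compute. Since~$\ep(a) - \r = \ell(a)$ and level~$k$ carries~$n^k$ vertices, Theorem~\ref{th:attraction} under the uniform product measure gives
$$ P(\at_T \equiv \mathrm{cst}) \geq 1 - \frac{1}{\tau + 1 - r} \cdot \frac{\sum_{k=1}^r k\,n^k}{1 + n + \cdots + n^r}, $$
which is the first asserted bound. The second follows from the finite-geometric-sum identities $1 + n + \cdots + n^r = (1 - n^{r+1})/(1 - n)$ and $\sum_{k=1}^r k\,n^k = n\,(r n^{r+1} - (r+1)n^r + 1)/(1 - n)^2$ (the latter, e.g., by differentiating $\sum_{k=0}^r x^k$ and evaluating at~$x = n$), whose quotient simplifies to~$n\,(r n^{r+1} - (r+1)n^r + 1)/((1 - n)(1 - n^{r+1}))$. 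The only step carrying any content is the eccentricity-inequality check, and it is painless because~$D(a,b)$ is always a singleton in a tree; I do not anticipate a genuine obstacle, the only points demanding care being the sign bookkeeping in the ancestor case and the mild use of~$n \geq 2$ in establishing~$\ep(a) = \ell(a) + r$.
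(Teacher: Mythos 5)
Your proposal is correct and follows essentially the same route as the paper: you verify the eccentricity inequalities~\eqref{eq:ecc-ineq} by exploiting that~$D(a,b)$ is a singleton in a tree and running the ancestor/non-ancestor case analysis (the paper's Lemma~\ref{lem:tree-ecc}), then apply Theorem~\ref{th:attraction} under the uniform product measure and evaluate the average of~$\ep(a)-\r$ level by level with the same geometric-sum identity. Your added verification that~$\ep(a)=d(0,a)+r$, $\r=r$, and~$\d=2r$ is a welcome bit of care that the paper leaves implicit, but it does not change the argument.
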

 When~$\tau = 2r$, the diameter of the tree, there is almost sure consensus starting from any initial configuration because all the
 individuals can interact regardless of their opinion.
 When~$\tau = 2r - 1$, the lower bound in the theorem becomes
 $$ 1 - \frac{1}{r} \ \frac{n + 2n^2 + \cdots + rn^r}{1 + n + n^2 + \cdots + n^r} \geq
    1 - \frac{n + 2n^2 + \cdots + rn^r}{rn + rn^2 + \cdots + rn^r} > 0, $$
 therefore the probability of consensus is positive in the nontrivial case~$\tau = 2r - 1$.
 However, the lower bound becomes negative when~$\tau = 2r - 2$.
 The lower bound from the theorem in the nontrivial case~$\tau = 2r - 1$ is computed in Table~\ref{tab:graphs}.c for various values of~$r$ and~$n$. \vspace*{5pt} \\
\noindent {\bf Star-like graphs}.
 Finally, we assume that the opinion graph consists of the star-like graph with~$n$ branches and radius~$r$ depicted in the third picture
 of Figure~\ref{fig:graphs} when~$n = 8$ and~$r = 3$.
 In this case, each individual can choose among~$n$ ideologies and decide to be moderate (with an opinion close to the center of the graph)
 or extremist (with an opinion far from the center of the graph) in her choice.
 This graph again satisfies the eccentricity inequalities and an application of Theorem~\ref{th:attraction} now gives the following
 lower bound for the probability of consensus.
\begin{theorem}[stars] --
\label{th:star}
 Assume that~$\tau \in [r, 2r)$, that~$G$ is the star-like graph with~$n$ branches and radius~$r$, and that the process starts from the uniform
 product measure. Then,
 $$ P (\at_T \equiv \mathrm{cst}) \geq 1 - \bigg(\frac{1}{\tau + 1 - r} \bigg) \ \frac{r (r + 1) \,n}{2 (1 + rn)}. $$
\end{theorem}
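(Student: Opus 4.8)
\noindent {\bf Proof proposal.}
The plan is to deduce the bound from Theorem~\ref{th:attraction}, so the argument has three ingredients: computing the relevant invariants of the star-like graph, verifying the eccentricity inequalities~\eqref{eq:ecc-ineq}, and evaluating the resulting expression. Write $c$ for the center of the graph, and for each branch $i \in \{1, \dots, n\}$ and each depth $k \in \{1, \dots, r\}$ let $v_{i,k}$ be the vertex at graph distance $k$ from $c$ along branch $i$, so that $\card(V) = 1 + rn$. Since the star-like graph is a tree, the geodesic between any two distinct vertices is unique, hence $D(a, b)$ reduces to the singleton formed by the unique neighbor of $a$ on the $a$-to-$b$ path, and in particular $\card D(a, b) = 1$ for all $a \neq b$. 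Assuming $n \geq 2$, a short computation shows that the farthest vertex from $v_{i,k}$ is a leaf lying on a branch $j \neq i$, at distance $k + r$, so that $\ep(v_{i,k}) = r + k$, while $\ep(c) = r$. Consequently the radius is $\r = r$ and the diameter is $\d = 2r$, and so the hypothesis $\tau \in [r, 2r)$ is precisely the hypothesis $\tau \in [\r, \d)$ required by Theorem~\ref{th:attraction}.

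The main step is to check~\eqref{eq:ecc-ineq}. Because $\card D(a,b) = \card D(b,a) = 1$, writing $a'$ and $b'$ for the neighbors of $a$ and $b$ on the path joining them, the inequality reduces to
$$ (\ep(a') - \ep(a)) + (\ep(b') - \ep(b)) \leq 0 \quad \hbox{for all} \quad a \neq b. $$
The explicit eccentricities above show that moving from a vertex to its neighbor toward $c$ decreases eccentricity by exactly one, whereas moving away from $c$ increases it by exactly one, and the center is the unique vertex of minimal eccentricity. I would then split into cases according to the relative location of $a$ and $b$. If $a$ and $b$ lie on a common branch, or one of them equals $c$, then along the $a$-to-$b$ path exactly one of $a', b'$ steps toward $c$ and the other steps away, so the two terms equal $-1$ and $+1$ and their sum is $0$. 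If instead $a$ and $b$ lie on distinct branches, the path joining them passes through $c$, so both $a'$ and $b'$ step toward $c$, each term equals $-1$, and the sum is $-2$. In all cases the sum is at most $0$, so~\eqref{eq:ecc-ineq} holds and Theorem~\ref{th:attraction} applies.

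It then remains to evaluate the bound. Starting from the uniform product measure, one has $\sum_{a \in V}(\ep(a) - \r) = \sum_{i=1}^n \sum_{k=1}^r k = n \, r(r+1)/2$, and plugging this together with $\card(V) = 1 + rn$ and $\tau + 1 - \r = \tau + 1 - r$ into the uniform-product-measure bound of Theorem~\ref{th:attraction} yields exactly the stated inequality. I expect the only delicate point to be the eccentricity computation and the accompanying case analysis, and in particular the tacit requirement $n \geq 2$, which is what guarantees that a vertex at depth $k$ "sees" a far leaf at distance $k + r$ on another branch; the rest is routine bookkeeping already packaged inside Theorem~\ref{th:attraction}. $\square$
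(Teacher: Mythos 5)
Your proposal is correct and follows essentially the same route as the paper: verify the eccentricity inequalities~\eqref{eq:ecc-ineq} for the star-like graph by a case analysis on whether the geodesic between $a$ and $b$ stays within one branch (sum $0$) or passes through the center (sum $-2$), then apply the uniform-product-measure bound of Theorem~\ref{th:attraction} with $\sum_{a \in V}(\ep(a)-\r) = n\,r(r+1)/2$ and $\card(V) = 1+rn$. Your explicit computation of the eccentricities $\ep(v_{i,k}) = r+k$, the identification $\r = r$, $\d = 2r$, and the remark that $n \geq 2$ is tacitly needed are details the paper leaves implicit, but the argument is the same in substance.
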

 The lower bound is nontrivial (strictly positive) if and only if
\begin{equation}
\label{eq:star}
  \tau \geq \bigg\lfloor r - 1 + \frac{r (r + 1) \,n}{2 (1 + rn)} \bigg\rfloor + 1.
\end{equation}
 The right-hand side of~\eqref{eq:star} is computed in Table~\ref{tab:graphs}.d for various choices of the number~$n$ of branches and the radius~$r$.
 As for the lattice, the table shows that there is a wide range of confidence thresholds less than the diameter~$2r$ for which the bound
 in the theorem is positive, proving that there is a positive probability of consensus in a number of nontrivial cases. \\ \\
 The rest of the paper is devoted to the proofs of Theorems~\ref{th:consensus}--\ref{th:star}.
 A common aspect of the proofs of the first three theorems is to identify a certain set of configurations that will ultimately lead to consensus
 with probability one (see Lemma~\ref{lem:consensus} below).
 Because one of the key ingredients to establish~Theorems~\ref{th:imitation} and~\ref{th:attraction} is also the application of the optional stopping
 theorem, we will start by proving that the time to fixation is an almost surely finite stopping time (see Lemmas~\ref{lem:stopping}
 and~\ref{lem:finite} below).
 Another difficulty is to show that the auxiliary process~$(\imm_t)$ mentioned above, as well as the process~$(\att_t)$ when~\eqref{eq:ecc-ineq}
 holds, are bounded supermartingales adapted to the natural filtration of the two opinion models (see Lemmas~\ref{lem:martingale}
 and~\ref{lem:supermartingale} below).
 Finally, the key idea to prove Theorems~\ref{th:lattice}--\ref{th:star} is to show that the three opinion graphs in Figure~\ref{fig:graphs} satisfy
 the eccentricity inequalities~\eqref{eq:ecc-ineq} in order to use the second part of Theorem~\ref{th:attraction} (see
 Lemmas~\ref{lem:lattice-ecc}--\ref{lem:star-ecc} below).

\newpage

\begin{table}[h!]
\centering
\scalebox{0.64}{\input{tab-graphs.pstex_t}}
\caption{\upshape{Numerical estimates for the attraction rule process when the opinion graph is a lattice (top two tables), a tree (third table), and
                  a star-like graph (last table).}}
\label{tab:graphs}
\end{table}

\newpage


\section{Time to fixation}
\label{sec:fixation}
 The objective of this section is to prove that, for both spatial opinion models, the time to fixation is an almost surely finite stopping time
 for the natural filtration~$(\F_t)$ of the process under consideration.
 The main reason for showing this result is to be able later to apply the optional stopping theorem to auxiliary processes stopped at the
 time to fixation.
 To begin with, we prove the result for the attraction process, and then explain at the end of this section how to adapt the proof to the
 imitation process.
 Recall that the time to fixation is defined as
 $$ T = \sup \{t : \at_t \neq \at_{t-} \} = \inf \{t : \at_t = \at_s \ \hbox{for all} \ s > t \}, $$
 the last time at which the opinion model is updated.
 Note that it is not clear from the definition that this time is a stopping time.
 Indeed, it is not obvious that the event that~$T > t$ can be determined from the realization of the process up to time~$t$ only.
 Throughout this section, given a configuration~$\at$, edge~$\{x, y \} \in \E$ is said to be a
 $$ \begin{array}{rcl}
     \hbox{{\bf congruent} edge} & \hbox{when} & d (\at (x), \at (y)) = 0, \vspace*{4pt} \\
    \hbox{{\bf compatible} edge} & \hbox{when} & d (\at (x), \at (y)) \in \{1, 2, \ldots, \tau \}, \vspace*{4pt} \\
    \hbox{{\bf discordant} edge} & \hbox{when} & d (\at (x), \at (y)) \in \{\tau + 1, \tau + 2, \ldots, \d \}. \end{array} $$
 From now on, we let~$\phi (\at)$ be the number of compatible edges:
 $$ \phi (\at) = \sum_{\{x, y \} \in \E} \ind \{0 < d (\at (x), \at (y)) \leq \tau \}. $$
 The fact that the time to fixation is a stopping time is an immediate consequence of the fact that it is the first time the opinion model
 hits the set~$\A$ of absorbing states, which turns out to coincide with the set of configurations with only congruent and discordant edges:
 $$ \A = \{\at : \phi (\at) = 0 \}. $$
 Proving that the time to fixation is also almost surely finite is more complicated.
 The idea is to show that, from each configuration, the process can reach a configuration with less compatible edges.
 This, together with the finiteness of the spatial graph, implies that the time to fixation is almost surely finite.
 The next lemma identifies the set of absorbing states.
\begin{lemma}[absorbing states] --
\label{lem:absorbing}
 For every configuration~$\at$,
 $$ P (\at_t = \at_s \ \hbox{for all} \ s > t \,| \,\at_t = \at) = \ind \{\at \in \A \} = \ind \{\phi (\at) = 0 \}. $$
\end{lemma}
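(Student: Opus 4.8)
The plan is to identify $\A = \{\at : \phi (\at) = 0\}$ as exactly the set of absorbing states of the attraction process, regarded as a continuous-time Markov chain on the finite set of opinion configurations, and then to deduce the lemma from the Markov property: conditionally on $\{\at_t = \at\}$, the event that $\at_s = \at$ for all $s > t$ has the same probability as the event that the chain started from $\at$ never leaves $\at$, so it is enough to show that this probability equals $1$ when $\phi (\at) = 0$ and equals $0$ when $\phi (\at) > 0$.

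For the first case, when $\phi (\at) = 0$ every edge of the spatial graph is congruent or discordant, and I would argue directly from the transition rates~\eqref{eq:attraction}: an update of the opinion at a vertex $x$ can only be triggered by a directed edge $\vec{xy}$ with $y \in N_x$ and $0 < d (\at (x), \at (y)) \leq \tau$, that is, across a compatible edge. With no compatible edges present, every rate $c_{a \to a'} (x, \at)$ vanishes, so $\at$ is absorbing and the process started from $\at$ stays put forever almost surely.

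For the second case, when $\phi (\at) > 0$ I would fix a compatible edge $\{x, y\}$, so that $d (\at (x), \at (y)) \in \{1, \dots, \tau\}$, and observe that the directed edge $\vec{xy}$ becomes active at rate one; when it fires, the opinion at $x$ is replaced by an opinion drawn uniformly from $D (\at (x), \at (y))$, which is nonempty since the opinion graph is connected. The point to check here is that this is genuinely a change of configuration: every $a' \in D (\at (x), \at (y))$ has $d (\at (x), a') = 1$, hence $a' \neq \at (x)$, so the inclusion $D (a, b) \subseteq \{a' : d (a, a') = 1\}$ guarantees that firing a compatible edge always moves the process to a distinct state. Consequently the total rate of leaving $\at$ is at least one, the holding time at $\at$ is exponential with parameter at least one and thus almost surely finite, and the process started from $\at$ leaves $\at$ almost surely; the conditional probability in the statement is therefore $0$. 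I do not anticipate any real obstacle beyond keeping track of this last observation, which is the only mildly delicate step.
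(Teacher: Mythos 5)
Your proposal is correct and follows essentially the same route as the paper: both arguments rest on the observation that only compatible edges can trigger an update (so a configuration with $\phi(\at) = 0$ is frozen), and that a compatible edge almost surely fires in finite time and then genuinely changes the configuration because the new opinion lies at distance one from the old. The only cosmetic difference is that you phrase this through the transition rates and the exit rate of the state, while the paper phrases it through the first activation time of a (compatible) edge in the graphical construction; the substance, including the delicate point that a firing compatible edge forces $\at_{s^*} \neq \at_t$, is identical.
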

\begin{proof}
 Assume first that~$\at_t \in \A$, let
 $$ s_* = \inf \{s > t : \hbox{some edge~$\vec{wz} \in \vec{\E}$ becomes active at time~$s$} \} $$
 and let~$\vec{xy}$ be the edge becoming active at that time.
 Because~$\phi (\at) = 0$, edge~$\{x, y \}$ is either a congruent edge or a discordant edge, meaning that~$x$ and~$y$ either
 already agree, or disagree too much to influence each other.
 In either case, the opinion at~$y$ remains unchanged.
 By induction, we deduce that the configuration remains unchanged at any future time so
\begin{equation}
\label{eq:absorbing-1}
  P (\at_t = \at_s \ \hbox{for all} \ s > t \,| \,\at_t = \at) = 1 \quad \hbox{for all} \quad \at \in \A.
\end{equation}
 Assume now that~$\at_t \notin \A$.
 Then, there is at least one compatible edge so
 $$ s^* = \inf \{s > t : \hbox{some compatible edge~$\vec{wz} \in \vec{\E}$ becomes active at time~$s$} \} $$
 is stochastically smaller than the exponential distribution with mean one, and therefore almost surely finite.
 In addition, letting~$\vec{xy}$ be the edge becoming active at time~$s^*$,
\begin{equation}
\label{eq:absorbing-2}
  d (\at_{s^*} (x), \at_{s^*} (y)) = d (\at_t (x), \at_t (y)) - 1
\end{equation}
 which implies that~$\at_{s^*} \neq \at_t$. In conclusion,
\begin{equation}
\label{eq:absorbing-3}
  P (\at_t = \at_s \ \hbox{for all} \ s > t \,| \,\at_t = \at) = 0 \quad \hbox{for all} \quad \at \notin \A.
\end{equation}
 Combining~\eqref{eq:absorbing-1} and~\eqref{eq:absorbing-3} gives the desired result.
\end{proof} \\ \\
 The fact that~$T$ is a stopping time directly follows from Lemma~\ref{lem:absorbing}.
\begin{lemma} --
\label{lem:stopping}
 The time to fixation~$T$ is a stopping time.
\end{lemma}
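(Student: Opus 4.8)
The plan is to show that, with probability one, the time to fixation coincides with the first hitting time of the set~$\A$ of absorbing states,
$$ T \;=\; T_\A \;:=\; \inf \{t \geq 0 : \at_t \in \A \} \;=\; \inf \{t \geq 0 : \phi (\at_t) = 0 \}, $$
and then to invoke the elementary fact that the hitting time of a subset of a finite state space by a right-continuous pure-jump process is a stopping time for the natural filtration. The identification $T = T_\A$ is precisely the content that Lemma~\ref{lem:absorbing} is designed to supply: the difficulty with the definition of~$T$ as a supremum of update times is that it is not visibly adapted, and this disappears once~$\A$ is characterized as the set of configurations from which no further update is possible.

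In detail, I would argue both inequalities pathwise (on the full-measure event on which every Poisson clock rings unboundedly often). By~\eqref{eq:absorbing-1}, once the process enters~$\A$ the configuration is frozen at all later times, so $\at_s \in \A$ for some~$s$ forces $T \leq s$; hence $T \leq T_\A$. Conversely, if $s < T_\A$ then $\at_s \notin \A$, so there is a compatible edge $\{x, y\}$, and the clock on~$\vec{xy}$ rings at some time $u > s$; if the configuration were unchanged on $(s, u]$ it would change at time~$u$ by~\eqref{eq:absorbing-2}, and otherwise it has already been updated in $(s, u]$, so in either case an update occurs strictly after~$s$, giving $T \geq s$. Letting $s \uparrow T_\A$ yields $T = T_\A$ almost surely. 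Finally, since $V^\V$ is finite (so $\A \subseteq V^\V$ is measurable) and $t \mapsto \at_t$ has right-continuous step-function paths, for every $t \geq 0$
$$ \{T_\A \leq t \} \;=\; \{\at_t \in \A \} \cup \bigcup_{q \,\in\, \mathbb{Q} \cap [0, t)} \{\at_q \in \A \} \;\in\; \F_t, $$
so that $T_\A$, and hence~$T$, is a stopping time for the (suitably augmented) natural filtration. The imitation process is handled identically, replacing Lemma~\ref{lem:absorbing} by its analogue for~$(\im_t)$.

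The only genuine obstacle, and the step I would write most carefully, is the almost-sure identity $T = T_\A$ — more precisely the lower bound $T \geq T_\A$, which is where one must rule out the possibility that the process stops outside~$\A$; this is exactly where Lemma~\ref{lem:absorbing} (via~\eqref{eq:absorbing-2}) is used. Once that identity is in hand, the passage to "$T$ is a stopping time" is routine measure theory on hitting times of jump processes, together with the finiteness of the state space.
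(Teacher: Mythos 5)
Your proposal is correct and follows essentially the same route as the paper: both identify the time to fixation~$T$ with the first hitting time of the absorbing set~$\A$ via Lemma~\ref{lem:absorbing}, and then conclude that this hitting time is adapted to the natural filtration. Your extra care with the pathwise identity~$T = T_\A$ and the measurability of the hitting time (hence the remark on augmentation) only makes explicit what the paper's one-line verification~$\{T > t\} = \{\at_t \in \A\}^c \in \F_t$ leaves implicit.
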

\begin{proof}
 Let~$(\F_t)$ be the natural filtration associated to the attraction process.
 According to Lemma~\ref{lem:absorbing}, the time to fixation can be written as
\begin{equation}
\label{eq:fixation-absorbing}
  T = \inf \{t : \at_t \in \A \} = \inf \{t : \phi (\at_t) = 0 \},
\end{equation}
 from which it follows that
 $$ \{T > t \} = \{\at_s \notin \A \ \hbox{for all} \ s \leq t \} = \{\at_t \notin \A \} = \{\at_t \in \A \}^c \in \F_t. $$
 This shows that~$T$ is a stopping time.
\end{proof} \\ \\
 The next lemma shows that, from each configuration~$\at$ that has at least one compatible edge, the attraction process can reach a configuration~$\at'$
 that has less compatible edges than~$\at$, which is the first step to prove that the time to fixation is almost surely finite.
\begin{lemma} --
\label{lem:decrease}
 For every~$\at \notin \A$, there exists~$\at'$ such that
 $$ P (\at_t = \at' \,| \,\at_0 = \at) > 0 \ \hbox{for all} \ t > 0 \quad \hbox{and} \quad \phi (\at') < \phi (\at). $$
\end{lemma}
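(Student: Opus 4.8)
The plan is to exhibit, for every $\at \notin \A$, an explicit target configuration $\at'$ that the process can reach from $\at$ through a finite sequence of updates each having positive probability, and then to verify by a counting argument that $\phi$ has strictly decreased. We may assume $\tau \geq 1$, since otherwise no edge is ever compatible, $\A$ is the whole state space, and the statement is vacuous. Fix a compatible edge $\{x, y\}$ of $\at$ and set $a = \at (y)$.

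The first ingredient is an elementary \emph{cluster-collapse} move. Call $C \subseteq \V$ a monochromatic cluster of a configuration $\eta$ if it is a connected component of the subgraph of $\G$ spanned by the congruent edges of $\eta$, so that $\eta$ is constant on $C$. I claim that if $C$ is such a cluster, with common opinion $c$, and some vertex of $C$ is adjacent to a vertex carrying an opinion $a'$ with $0 < d (c, a') \leq \tau$, then with positive probability the process passes from $\eta$ to the configuration obtained from $\eta$ by resetting every opinion on $C$ to $a'$ and leaving all other vertices unchanged. To see this, fix a geodesic $c = c_0, c_1, \ldots, c_m = a'$ in the opinion graph, with $m = d (c, a') \leq \tau$, and move $C$ to $a'$ one unit at a time: to push $C$ from the constant value $c_{j-1}$ to the constant value $c_j$, first activate $\vec{wu}$ for a vertex $w \in C$ adjacent to a vertex $u$ carrying opinion $a'$ --- with positive probability this update sends $w$ to $c_j$, since $c_j \in D (c_{j-1}, a')$ and $d(c_{j-1},a') = m-j+1 \in [1,\tau]$ --- and then propagate $c_j$ through the connected cluster $C$ along internal edges, each such update being forced because $D (c_{j-1}, c_j) = \{c_j \}$. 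Iterating over $j = 1, \ldots, m$ realizes the move.

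The configuration $\at'$ is now obtained by iterating this move so as to grow the ``$a$-colored region''. First collapse the monochromatic cluster of $x$ onto $a$, which is legitimate since $\{x, y\}$ is compatible with $\at (y) = a$. Then, as long as the current configuration has a compatible edge with one endpoint carrying opinion $a$ and the other a different opinion, collapse the monochromatic cluster of that other endpoint onto $a$. Each step strictly increases the number of vertices carrying opinion $a$, so the procedure terminates after finitely many steps at a configuration $\at'$ reached from $\at$ by a finite concatenation of positive-probability updates. To obtain $P (\at_t = \at' \mid \at_0 = \at) > 0$ for every $t > 0$, perform this sequence within $(0, t/2)$ and then have no clock attached to a compatible edge of $\at'$ ring during $(t/2, t)$, an event of positive probability. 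Finally I check $\phi (\at') < \phi (\at)$: let $S$ be the monochromatic cluster of $y$ in $\at'$, with common opinion $a$. Since only collapsed vertices changed their opinion and each collapsed vertex ends up congruent-connected to $y$, we have $\at' (v) = \at (v)$ for all $v \notin S$, and $x, y \in S$. By the stopping rule, no edge joining $S$ to its complement is compatible in $\at'$, and none is congruent either (its outside endpoint would carry opinion $a$ and hence lie in $S$), so every edge across $\partial S$ is discordant in $\at'$ and every edge inside $S$ is congruent in $\at'$. Splitting $\E$ into edges inside $S$, inside $\V \setminus S$, and across $\partial S$, the first and third groups contribute $0$ to $\phi (\at')$, while the second contributes equally to $\phi (\at')$ and $\phi (\at)$ because the configurations agree off $S$; hence $\phi (\at) - \phi (\at')$ is the number of compatible edges of $\at$ lying inside $S$ or across $\partial S$, which is at least $1$ since $\{x, y\}$ is a compatible edge of $\at$ contained in $S$.

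The step I expect to demand the most care is the cluster-collapse move together with the termination argument: one must confirm that each intermediate update genuinely has positive probability under the $\uniform$ law on $D (\cdot, \cdot)$, that collapsing a cluster never alters opinions outside it, and that ``growing the $a$-region'' cannot leave behind unnoticed compatible edges --- which is precisely why the procedure is run until no compatible edge leaves the region rather than stopped after a single collapse, since a single collapse onto $a$ may well create new compatible edges.
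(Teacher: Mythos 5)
Your reachability argument (the cluster-collapse move, its iteration and termination, and the padding needed to get positive probability at every fixed $t>0$) is sound, but the final bookkeeping contains a false step. You take $S$ to be the congruent cluster of $y$ in $\at'$ and assert that every collapsed vertex ends up congruent-connected to $y$, hence that $\at'$ agrees with $\at$ off $S$. Under your procedure this need not hold: the compatible edge you collapse along may have its $a$-endpoint at a vertex that already carried opinion $a$ in $\at$ and is not congruent-connected to $y$, so the collapsed cluster attaches there instead. Concretely, take the spatial path $x-y-u-v-w$, the opinion path $1-2-3-4-5$ with $\tau=1$, and $\at=(1,2,5,2,3)$; with $a=\at(y)=2$ your procedure turns $x$ into $2$ and then $w$ into $2$ (via $v$), giving $\at'=(2,2,5,2,2)$, whereas the congruent cluster of $y$ in $\at'$ is $\{x,y\}$: the changed vertex $w$ lies outside $S$, and the edges off $S$ contribute $1$ to $\phi(\at)$ but $0$ to $\phi(\at')$, so your claimed equality fails. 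The lemma survives because the discrepancy can only go the favorable way: since your stopping rule forbids compatible edges at \emph{every} $a$-vertex of $\at'$, simply redefine $S$ as the set of all vertices carrying $a$ in $\at'$ (it contains $x$, $y$ and every changed vertex). Then edges inside $S$ are congruent in $\at'$, edges leaving $S$ are discordant (not congruent by definition of $S$, not compatible by the stopping rule), and edges avoiding $S$ join two unchanged vertices, so $\phi(\at')\leq\phi(\at)-1$ because $\{x,y\}$ is a compatible edge of $\at$ inside $S$. Alternatively, restrict each collapse to compatible edges whose $a$-endpoint lies in the region already grown from $y$; that restores your congruent-connectedness claim.

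For comparison, the paper's proof needs no iteration: it collapses in one pass the connected component $\C_x$ containing $x$ of the subgraph spanned by the vertices whose opinion in $\at$ is within distance $\tau$ of $\at(x)$, resetting all of them to $\at(x)$. By construction every edge leaving $\C_x$ has its outer opinion at distance greater than $\tau$ from $\at(x)$, so after the collapse all boundary edges are discordant and no new compatible edges can appear there; the phenomenon you guard against with the region-growing loop (a single collapse creating new compatible edges) is sidestepped by the choice of the set being collapsed. Your route, once the bookkeeping is repaired as above, is also correct and even yields a configuration in which no compatible edge touches any $a$-vertex, but it is longer than necessary for the stated lemma.
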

\begin{figure}[t]
\centering
\scalebox{0.50}{\input{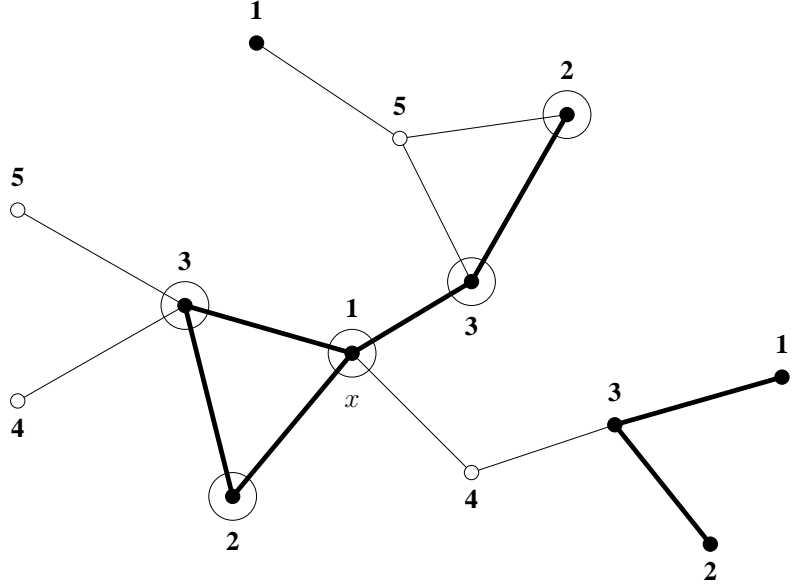}}
\caption{\upshape{Picture of the cluster~$\C_x$.
 The numbers next to the vertices represent the individuals' opinion in configuration~$\at$.
 In our example, the opinion distance we use is the Euclidean distance~$d (a, b) = |a - b|$ and the confidence threshold is~$\tau = 2$.
 Vertex~$x$ is the vertex with opinion~1 in the middle of the picture.
 In particular, the subset~$\V_x$ consists of all the vertices with opinion~1, 2, or 3, which corresponds to the black vertices.
 The subset~$\E_x$ is represented by the bold edges, while the cluster~$\C_x$ is represented by the circled vertices.
 Configuration~$\at'$ in the proof of Lemma~\ref{lem:decrease} is obtained by simply turning the opinion of all the circled vertices into opinion~1.}}
\label{fig:cluster}
\end{figure}
\begin{proof}
 Because~$\phi (\at) \neq 0$, configuration~$\at$ has at least one compatible edge, say~$\{x, y \} \in \E$.
 Then, we define the subset of vertices and the subset of edges
 $$ \V_x = \{z \in \V : d (\at (x), \at (z)) \leq \tau \} \quad \hbox{and} \quad \E_x = \{ \{w, z \} \in \E : w, z \in \V_x \}. $$
 That is, $\V_x$ is the set of individuals that are within opinion distance~$\tau$ from~$x$, and~$\E_x$ is the set of edges
 that connect individuals in~$\V_x$.
 We also let
 $$ \C_x = \hbox{connected component of the graph~$(\V_x, \E_x)$ that contains~$x$}. $$
 See Figure~\ref{fig:cluster} for a picture.
 Assuming that edge~$\vec{yx} \in \vec{\E}$ becomes active~$\tau$ times in a row before anything else happens, which occurs with positive
 probability because the graph is finite, after these interactions, the individual at vertex~$y$ has the same opinion as the initial opinion
 of the individual at vertex~$x$.
 Because the subgraph induced by~$\C_x$ is connected and finite, a simple induction implies that, for all~$t > 0$, there is a positive
 probability that, at time~$t$, all the individuals in~$\C_x$ have the same opinion as the initial opinion of the individual at vertex~$x$.
 In equations,
\begin{equation}
\label{eq:decrease-1}
  P (\at_t = \at' \,| \,\at_0 = \at) > 0 \quad \hbox{for all} \quad t > 0,
\end{equation}
 where~$\at'$ is the configuration
 $$ \at' (z) = \left\{\begin{array}{rcl}
                      \at (x) & \hbox{for all} & z \in \C_x \vspace*{3pt} \\
                      \at (z) & \hbox{for all} & z \in \V \setminus \C_x \end{array} \right. $$
 obtained from~$\at$ by replacing all the opinions in~$\C_x$ by the initial opinion~$\at (x)$.
 To compare the number of compatible edges, observe that 
 $$ \begin{array}{rcl}
      d (\at' (w), \at' (z)) = d (\at (x), \at (x)) = 0    & \hbox{whenever} & w \in \C_x \ \hbox{and} \ z \in \C_x \vspace*{4pt} \\
      d (\at' (w), \at' (z)) = d (\at (x), \at (z)) > \tau & \hbox{whenever} & w \in \C_x \ \hbox{and} \ z \in \V \setminus \C_x. \end{array} $$
 In summary, each edge~$\{w, z \} \in \E$ is a
\begin{equation}
\label{eq:decrease-2}
  \begin{array}{rcl}
   \hbox{congruent edge for~$\at'$} & \hbox{whenever} & w \in \C_x \ \hbox{and} \ z \in \C_x \vspace*{4pt} \\
  \hbox{discordant edge for~$\at'$} & \hbox{whenever} & w \in \C_x \ \hbox{and} \ z \in \V \setminus \C_x \end{array}
\end{equation}
 while it is obvious that all the other edges are of the same type for both configuration~$\at$ and configuration~$\at'$.
 Recall also from the definition of~$x$ and~$y$ that
\begin{equation}
\label{eq:decrease-3}
  \{x, y \} \ \hbox{is a compatible edge for~$\at$ but a congruent edge for~$\at'$}.
\end{equation}
 It follows from~\eqref{eq:decrease-2} and~\eqref{eq:decrease-3} that
\begin{equation}
\label{eq:decrease-4}
  \phi (\at') \leq \phi (\at) - 1 < \phi (\at).
\end{equation}
 Combining~\eqref{eq:decrease-1} and~\eqref{eq:decrease-4} implies the lemma.
\end{proof} \\ \\
 Using the previous lemma and the finiteness of the spatial graph, we can now prove that the time to fixation is almost surely finite.
\begin{lemma} --
\label{lem:finite}
 The time to fixation~$T$ is almost surely finite.
\end{lemma}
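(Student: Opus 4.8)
The plan is to combine Lemma~\ref{lem:decrease} with the finiteness of the configuration space and the Markov property: I will show that, from every starting configuration, the attraction process enters the absorbing set~$\A$ within a fixed deterministic amount of time with a probability bounded away from zero, and then iterate this estimate to force $T < \infty$ almost surely.

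First I would fix the integer $m = \card (\E)$ and prove that
$$ \delta := \inf_{\at} \,P (\at_m \in \A \,| \,\at_0 = \at) > 0, $$
where the infimum is over the finitely many configurations $\at : \V \to V$. When $\at \in \A$, Lemma~\ref{lem:absorbing} gives that this probability equals~$1$. When $\at \notin \A$, observe that $\phi (\at) \leq \card (\E) = m$ and apply Lemma~\ref{lem:decrease} repeatedly: there is a configuration $\at^{(1)}$ with $\phi (\at^{(1)}) \leq \phi (\at) - 1$ and $P (\at_1 = \at^{(1)} \,| \,\at_0 = \at) > 0$; if $\at^{(1)} \notin \A$, there is $\at^{(2)}$ with $\phi (\at^{(2)}) \leq \phi (\at^{(1)}) - 1$ and $P (\at_1 = \at^{(2)} \,| \,\at_0 = \at^{(1)}) > 0$; and so on. Because $\phi$ is a nonnegative integer that drops by at least one at each step, after at most $\phi (\at) \leq m$ steps the process reaches a configuration in~$\A$, which then stays fixed by Lemma~\ref{lem:absorbing}. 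By the Markov property and time homogeneity, the probability of this prescribed sequence of one-step transitions up to time~$m$ is a finite product of strictly positive numbers, hence strictly positive; taking the minimum over the finitely many initial configurations yields a constant $\delta \in (0, 1]$.

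Next I would iterate this bound along the time grid $\{0, m, 2m, \ldots \}$. Since~$\A$ is absorbing, the identity~\eqref{eq:fixation-absorbing} gives $\{T > t \} = \{\at_t \notin \A \}$ for every~$t$, and applying the Markov property at the times $jm$ shows that
$$ P (T > (j + 1) m) = P (\at_{(j + 1) m} \notin \A) \leq (1 - \delta) \,P (\at_{jm} \notin \A) = (1 - \delta) \,P (T > jm), $$
so that $P (T > jm) \leq (1 - \delta)^j$ for all $j \geq 0$. Letting $j \to \infty$ gives $P (T = \infty) = 0$, which is the claim; integrating the tail bound even yields $E (T) \leq m / \delta < \infty$.

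The only genuinely delicate point is the strict positivity of~$\delta$: Lemma~\ref{lem:decrease} only provides, for each \emph{fixed} configuration, a positive but a priori arbitrarily small probability of decreasing~$\phi$, so one must invoke the finiteness of the configuration space (there are $\card (V)^{\card (\V)}$ configurations) in order to pass from these configuration-dependent probabilities to a uniform lower bound. Everything else is a routine application of the Markov property.
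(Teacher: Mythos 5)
Your proposal is correct and follows essentially the same route as the paper: the core step is the repeated application of Lemma~\ref{lem:decrease} to build, within at most $\card(\E)$ unit-time steps, a path of configurations with strictly decreasing~$\phi$ ending in~$\A$, with positive probability via the Markov property. The only difference is that you also spell out the uniformization over the finitely many configurations and the resulting geometric tail bound $P(T > jm) \leq (1-\delta)^j$, a step the paper leaves implicit when it asserts that positive reachability of~$\A$ from every configuration ``suffices''; this is a harmless (indeed welcome) elaboration rather than a different argument.
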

\begin{proof}
 Because the time to fixation is the first time at which the attraction process hits an absorbing state, it suffices to prove that, for
 every configuration~$\at$,
 $$ P (\at_t \in \A \,| \,\at_0 = \at) > 0 \quad \hbox{for some} \quad t > 0. $$
 In words, there exists at least one absorbing state~$\at'$ that can be reached from~$\at$.
 This is obvious if configuration~$\at \in \A$.
 Otherwise, applying Lemma~\ref{lem:decrease} a finite number of times (at most as many times as there are edges in the spatial graph) gives
 a sequence of configurations
 $$ \at = \at^0, \at^1, \at^2, \ldots, \at' = \at^n \quad \hbox{with} \quad n \leq \card (\E) $$
 such that, for all~$k = 0, 1, \ldots, n - 1$ and all~$t > 0$,
 $$ P (\at_t = \at^{k + 1} \,| \,\at_0 = \at^k) > 0 \quad \hbox{and} \quad \phi (\at^0) < \phi (\at^1) < \cdots < \phi (\at^n) = 0. $$
 Taking~$t = 1$ and using Chapman-Kolmogorov's equations, we get
 $$ \begin{array}{l}
    \displaystyle P (\at_n = \at' \,| \,\at_0 = \at) \geq \prod_{k = 0}^{n - 1} P (\at_{k + 1} = \at^{k + 1} \,| \,\at_k = \at^k) \vspace*{-4pt} \\ \hspace*{120pt} =
    \displaystyle \prod_{k = 0}^{n - 1} P (\at_1 = \at^{k + 1} \,| \,\at_0 = \at^k) > 0 \end{array} $$
 where~$\phi (\at') = \phi (\at^n) = 0$ and so~$\at' \in \A$.
 This completes the proof.
\end{proof} \\ \\
 The proofs of Lemmas~\ref{lem:absorbing}--\ref{lem:finite} easily extend to the imitation process with only two exceptions.
 First, because now two neighbors completely agree after an interaction, equation~\eqref{eq:absorbing-2} becomes
 $$ 0 = d (\im_{s^*} (x), \im_{s^*} (y)) < 1 \leq d (\im_t (x), \im_t (y)) $$
 therefore Lemma~\ref{lem:absorbing} also holds for the imitation process.
 Second, directed edge~$\vec{yx} \in \vec{\E}$ in the proof of Lemma~\ref{lem:decrease} only needs to become active
 once for vertex~$y$ to have the same opinion as the initial opinion at vertex~$x$.
 In particular, this lemma also holds for the imitation process.
 In conclusion, the time to fixation is an almost surely finite stopping time for both processes.


\section{Almost sure consensus}
\label{sec:consensus}
 This section is devoted to the proof of Theorem~\ref{th:consensus} which states almost sure consensus for both processes when~$\tau \geq \d$,
 the diameter of the opinion graph.
 Because the proof is exactly the same for the imitation process and the attraction process, we only show the result for the latter.
 We start with a general lemma that will be used to prove Theorems~\ref{th:consensus}--\ref{th:attraction}.
\begin{lemma} --
\label{lem:consensus}
 We have the inclusion of events
 $$ A = \{\ep (\at_T (x)) \leq \tau \ \hbox{for some} \ x \in \V \} \subset \{\at_T \equiv \mathrm{cst} \}. $$
\end{lemma}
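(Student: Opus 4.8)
The plan is to exploit the characterization of the fixation time as the hitting time of the set~$\A$ of absorbing configurations, which was established in Lemmas~\ref{lem:absorbing}--\ref{lem:finite}. By~\eqref{eq:fixation-absorbing} and the fact that the attraction process is piecewise constant and right-continuous, we have~$\at_T \in \A$, i.e.\ $\phi (\at_T) = 0$, so every edge of the spatial graph is either congruent or discordant for the configuration~$\at_T$. I would fix a realization in the event~$A$ and pick a vertex~$x \in \V$ with~$\ep (\at_T (x)) \leq \tau$.

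The key observation is local. For any neighbor~$y \in N_x$, the definition of eccentricity gives~$d (\at_T (x), \at_T (y)) \leq \ep (\at_T (x)) \leq \tau$, so the edge~$\{x, y \}$ cannot be discordant; since~$\phi (\at_T) = 0$, it must then be congruent, which means~$\at_T (y) = \at_T (x)$. In particular~$\ep (\at_T (y)) = \ep (\at_T (x)) \leq \tau$, so the very property assumed for~$x$ is inherited by each of its neighbors.

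It then remains to propagate this along the spatial graph. Because~$\G$ is connected, every vertex~$z \in \V$ is joined to~$x$ by a path~$x = z_0, z_1, \ldots, z_k = z$, and a straightforward induction on~$i$ using the local observation at~$z_i$ (which is licit since~$\ep (\at_T (z_i)) \leq \tau$ by the induction hypothesis) yields~$\at_T (z_{i + 1}) = \at_T (z_i)$, hence~$\at_T (z) = \at_T (x)$. Therefore~$\at_T \equiv \at_T (x)$ is constant, proving~$A \subset \{\at_T \equiv \mathrm{cst} \}$. The imitation process is handled verbatim, since Lemmas~\ref{lem:absorbing}--\ref{lem:finite} hold in that case as well and the set of absorbing states is defined in the same way.

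There is essentially no serious obstacle here: the argument is a one-line structural observation (absorbing $\Rightarrow$ no compatible edges, and small eccentricity forbids discordant edges) followed by connectedness. The only point that deserves a word of care is justifying~$\at_T \in \A$ rather than merely some compatible-edge-free property holding along the trajectory, which is exactly what~\eqref{eq:fixation-absorbing} provides.
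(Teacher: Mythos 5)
Your proposal is correct and follows essentially the same argument as the paper: use Lemma~\ref{lem:absorbing} (equivalently~\eqref{eq:fixation-absorbing}) to conclude that~$\at_T$ has no compatible edge, note that the eccentricity bound at~$x$ forces every incident edge to be congruent rather than discordant, and propagate by induction along paths using the connectedness of the spatial graph. Your explicit remark that the bound~$\ep(\at_T(\cdot)) \leq \tau$ is inherited by neighbors is exactly the point that makes the paper's ``simple induction'' go through, so there is nothing to add.
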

\begin{proof}
 Assume that~$A$ occurs.
 Then, there exists a vertex~$x$ such that~$\ep (\at_T (x)) \leq \tau$.
 In particular, for every neighbor~$y$ of vertex~$x$, we must have
\begin{equation}
\label{eq:consensus-1}
  d (\at_T (x), \at_T (y)) \leq \max_{b \in V} \,d (\at_T (x), b) = \ep (\at_T (x)) \leq \tau.
\end{equation}
 In addition, according to Lemma~\ref{lem:absorbing}, configuration~$\at_T$ is an absorbing state so it does not have any compatible edge,
 therefore we must have
\begin{equation}
\label{eq:consensus-2}
  \at_T (x) = \at_T (y) \quad \hbox{or} \quad d (\at_T (x), \at_T (y)) > \tau.
\end{equation}
 It follows from~\eqref{eq:consensus-1} and~\eqref{eq:consensus-2} that~$\at_T (x) = \at_T (y)$ for all the neighbors~$y$ of vertex~$x$.
 Because the spatial graph is connected, a simple induction implies that
 $$ \at_T (x) = \at_T (z) \quad \hbox{for all} \quad z \in \V, $$
 meaning that~$\at_T \equiv \mathrm{cst}$.
 This proves the lemma.
\end{proof} \\ \\
 The proof of Theorem~\ref{th:consensus} is now straightforward. \\ \\
\begin{proofof}{Theorem~\ref{th:consensus}}
 Assume that~$\tau \geq \d$. Then,
 $$ \ep (\at_t (x)) \leq \max_{a \in V} \,\ep (a) = \d \leq \tau \quad \hbox{with probability one} $$
 for all~$x \in \V$ and~$t \geq 0$.
 This and Lemma~\ref{lem:consensus} imply that
 $$ \begin{array}{l}
      P (\at_T \equiv \mathrm{cst}) \geq P (\ep (\at_T (x)) \leq \tau \ \hbox{for some} \ x \in \V) \vspace*{4pt} \\ \hspace*{100pt}
                                    \geq P (\ep (\at_T (x)) \leq \tau \ \hbox{for all} \ x \in \V) = 1. \end{array} $$
 This completes the proof.
\end{proofof}


\section{Consensus under the imitation rule}
\label{sec:imitation}
 This section is devoted to the proof of Theorem~\ref{th:imitation}.
 The key ingredient is to apply the optional stopping theorem to the process that keeps track of the number of individuals whose opinion has
 eccentricity at most the confidence threshold, i.e.,
 $$ \imm_t = \sum_{x \in \V} \ \ind \{\ep (\im_t (x)) \leq \tau \} = \card \{x \in \V : \ep (\im_t (x)) \leq \tau \}, $$
 stopped at the time to fixation.
 In order to apply the optional stopping theorem, we first prove that this process is a bounded martingale.
\begin{lemma} --
\label{lem:martingale}
 The process~$(\imm_t)$ is a bounded martingale.
\end{lemma}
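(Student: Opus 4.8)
The plan is to verify the two defining properties of a bounded martingale directly. Boundedness and adaptedness are immediate: $\imm_t$ counts a subset of the finite vertex set $\V$, so $0 \leq \imm_t \leq \card (\V)$ for all $t$, and $\imm_t$ is a deterministic function of $\im_t$, hence measurable with respect to $\F_t$ and integrable. It therefore remains only to establish the martingale identity, and for this I would work through the generator.

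Since the imitation process is a continuous-time Markov chain on the finite configuration space $V^{\V}$, Dynkin's formula applies: for every function $f$ on configurations, the process $f (\im_t) - \int_0^t (Lf)(\im_s) \, ds$ is an $(\F_t)$-martingale, where $L$ denotes the generator. Applying this with $f (\im) = \card \{x \in \V : \ep (\im (x)) \leq \tau \}$, it suffices to check that $(Lf)(\im) = 0$ for every configuration $\im$. To compute $Lf$, observe that each directed edge $\vec{xy}$ becomes active at rate one and produces an actual update exactly when $\{x, y\}$ is a compatible edge, i.e. when $\im (x) \neq \im (y)$ and $d (\im (x), \im (y)) \leq \tau$; in that event only the opinion at $x$ changes --- from $\im (x)$ to $\im (y)$ --- so the value of $f$ changes by exactly $\ind \{\ep (\im (y)) \leq \tau \} - \ind \{\ep (\im (x)) \leq \tau \}$. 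Grouping the two directed edges $\vec{xy}$ and $\vec{yx}$ induced by each undirected edge $\{x, y\}$, and using that the compatibility condition is symmetric in $x$ and $y$ because $d$ is symmetric, the total contribution of $\{x, y\}$ to $(Lf)(\im)$ is
$$ \ind \{0 < d (\im (x), \im (y)) \leq \tau \} \, \Big( \big( \ind \{\ep (\im (y)) \leq \tau \} - \ind \{\ep (\im (x)) \leq \tau \} \big) + \big( \ind \{\ep (\im (x)) \leq \tau \} - \ind \{\ep (\im (y)) \leq \tau \} \big) \Big) = 0. $$
Summing over $\{x, y\} \in \E$ gives $(Lf)(\im) = 0$, hence $(\imm_t)$ is a martingale, and it is bounded by the first paragraph.

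There is no genuine obstacle here; the only point requiring care is the bookkeeping in the generator computation --- writing the rates~\eqref{eq:imitation} out correctly, checking that a single activation changes $\imm_t$ through exactly one vertex and by at most one unit, and pairing the two orientations of each spatial edge. The crux of the cancellation is the symmetry of the opinion distance $d$, which forces the activation indicator to agree for $\vec{xy}$ and $\vec{yx}$ while the two eccentricity-indicator differences are exact negatives of each other. A slightly more hands-on variant that avoids Dynkin's formula is to condition on $\im_t$, let $\sigma$ be the first update time after $t$, and evaluate $E (\imm_\sigma - \imm_t \mid \F_t)$ using that, given an update occurs, the active directed edge is uniform over the directed compatible edges; the same pairing shows this expectation is zero, and $\imm$ is constant between updates.
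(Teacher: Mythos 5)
Your proof is correct and follows essentially the same route as the paper: the paper also computes the infinitesimal drift of~$(\imm_t)$ edge by edge and cancels the contributions of the two orientations~$\vec{xy}$, $\vec{yx}$ of each spatial edge using the symmetry of the opinion distance, the only cosmetic difference being that the paper phrases this as equality of the~$+1$ and~$-1$ jump rates via an informal limit of~$E (\imm_{t + s} - \imm_t \,| \,\F_t)/s$ rather than through Dynkin's formula. Your boundedness argument~$0 \leq \imm_t \leq \card (\V)$ matches the paper's as well.
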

\begin{proof}
 The intuition behind the proof is the following.
 The process can only increase or decrease by one, which occurs each time a compatible edge connecting an individual with opinion's eccentricity
 larger than~$\tau$ and an individual with opinion's eccentricity at most~$\tau$ becomes active.
 When this happens, each of the two individuals is equally likely to mimic the other individual, so the process is equally likely to increase
 or decrease by one and thus is a martingale.
 To turn this heuristics into equations, because each directed edge becomes active at rate one,
\begin{equation}
\label{eq:martingale-1}
  \begin{array}{l} \lim_{s \downarrow 0} \ \displaystyle \frac{P (\imm_{t + s} - \imm_t = + 1 \,| \,\F_t)}{s} \vspace*{4pt} \\ \hspace*{30pt} =
  \displaystyle \sum_{\vec{yx} \in \vec{\E}} \ind \{0 < d (\im_t (x), \im_t (y)) \leq \tau \} \
  \ind \{\ep (\im_t (x)) > \tau, \,\ep (\im_t (y)) \leq \tau \}. \end{array}
\end{equation}
 The first indicator function expresses the fact that the two individuals are compatible, while the second indicator function insures that
 the eccentricity of the opinion at~$x$ switches from larger than~$\tau$ to at most~$\tau$ so that~$(\imm_t)$ indeed increases by one.
 Similarly, we have
 $$ \begin{array}{l} \lim_{s \downarrow 0} \ \displaystyle \frac{P (\imm_{t + s} - \imm_t = - 1 \,| \,\F_t)}{s} \vspace*{4pt} \\ \hspace*{30pt} =
    \displaystyle \sum_{\vec{yx} \in \vec{\E}} \ind \{0 < d (\im_t (x), \im_t (y)) \leq \tau \} \
    \ind \{\ep (\im_t (x)) \leq \tau, \,\ep (\im_t (y)) > \tau \}. \end{array} $$
 Exchanging the roles of~$x$ and~$y$, this can be rewritten as
\begin{equation}
\label{eq:martingale-2}
  \sum_{\vec{yx} \in \vec{\E}} \ind \{0 < d (\im_t (y), \im_t (x)) \leq \tau \} \
  \ind \{\ep (\im_t (y)) \leq \tau, \,\ep (\im_t (x)) > \tau \}.
\end{equation}
 By symmetry of the graph distance, \eqref{eq:martingale-2} equals the right-hand side of~\eqref{eq:martingale-1}, so
 $$ \begin{array}{l} \lim_{s \downarrow 0} \ \displaystyle \frac{E (\imm_{t + s} - \imm_t \,| \,\F_t)}{s} \vspace*{4pt} \\ \hspace*{50pt} =
    \displaystyle \lim_{s \downarrow 0} \bigg(\displaystyle \frac{P (\imm_{t + s} - \imm_t = + 1\,| \,\F_t)}{s} -
    \displaystyle \frac{P (\imm_{t + s} - \imm_t = - 1\,| \,\F_t)}{s} \bigg) = 0, \end{array} $$
 showing that~$(\imm_t)$ is a martingale.
 The fact that the process is also bounded directly follows from the fact that the spatial graph is finite: $|X_t| \leq \card (\V) < \infty$.
\end{proof}
\begin{lemma} --
\label{lem:range}
 The range of~$\imm_T$ reduces to~$\{0, \card (\V) \}$.
\end{lemma}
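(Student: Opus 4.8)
The plan is to read Lemma~\ref{lem:range} off from the consensus criterion already established. Recall that $\imm_T = \card\{x \in \V : \ep(\im_T(x)) \leq \tau\}$, so the event $\{\imm_T \geq 1\}$ is precisely the event $A = \{\ep(\im_T(x)) \leq \tau \text{ for some } x \in \V\}$ appearing in Lemma~\ref{lem:consensus}. Although that lemma is stated for the attraction process, its proof uses only Lemma~\ref{lem:absorbing} (which, as noted at the end of Section~\ref{sec:fixation}, holds for the imitation process as well) and the connectedness of the spatial graph; hence the same argument yields $A \subset \{\im_T \equiv \mathrm{cst}\}$ for the imitation process. The first step is therefore to invoke this inclusion: on $\{\imm_T \geq 1\}$ there is a vertex $x$ with $\ep(\im_T(x)) \leq \tau$, and the final configuration is constant, equal to $c := \im_T(x)$.

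The second step is to observe that this forces $\imm_T = \card(\V)$. Indeed, if $\im_T \equiv c$ then $\ep(\im_T(z)) = \ep(c) = \ep(\im_T(x)) \leq \tau$ for every $z \in \V$, so every vertex is counted by $\imm_T$. Combining the two steps gives $\{\imm_T \geq 1\} \subset \{\imm_T = \card(\V)\}$; since $\imm_T$ is a nonnegative integer, it follows that $\imm_T \in \{0, \card(\V)\}$ almost surely, which is exactly the assertion of the lemma. This dichotomy, together with Lemma~\ref{lem:martingale} and the optional stopping theorem applied at the almost surely finite stopping time $T$, is what will subsequently yield the bound in Theorem~\ref{th:imitation}.

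There is no genuine obstacle here; the only point that deserves to be spelled out is that $\imm_T \geq 1$ does not merely put one vertex into the counted set but, through the consensus conclusion, puts \emph{all} of them there — this is precisely the content of Lemma~\ref{lem:consensus}, and it is where the connectedness of $\G$ and the description of the absorbing states actually enter the argument.
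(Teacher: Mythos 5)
Your proof is correct, but it routes the argument differently from the paper. The paper proves Lemma~\ref{lem:range} directly by contradiction: if $\imm_T$ took a value strictly between $0$ and $\card(\V)$, then by connectedness of the spatial graph some path from a vertex with $\ep(\im_T(\cdot)) \leq \tau$ to one with $\ep(\im_T(\cdot)) > \tau$ would contain an edge $\{x', y'\}$ with $\ep(\im_T(x')) \leq \tau < \ep(\im_T(y'))$, and such an edge is necessarily compatible (since $1 \leq d(\im_T(x'), \im_T(y')) \leq \ep(\im_T(x')) \leq \tau$), contradicting $\phi(\im_T) = 0$, i.e., the fact that $\im_T$ is an absorbing state. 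You instead deduce the dichotomy as a corollary of Lemma~\ref{lem:consensus}: on $\{\imm_T \geq 1\}$ the consensus inclusion forces $\im_T \equiv c$ with $\ep(c) \leq \tau$, hence $\imm_T = \card(\V)$. Both arguments ultimately rest on the same two ingredients (the characterization of absorbing states and connectedness of $\G$), and your justification that Lemma~\ref{lem:consensus} applies verbatim to the imitation process is sound and consistent with how the paper itself invokes it in the proof of Theorem~\ref{th:imitation}. The trade-off is that the paper's version is self-contained and keeps Lemma~\ref{lem:range} and Lemma~\ref{lem:consensus} as logically independent inputs to Theorem~\ref{th:imitation}, whereas your version is shorter because the propagation-along-the-graph step is delegated to the consensus lemma rather than redone with an explicit boundary edge; the end use in the optional stopping argument is unaffected.
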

\begin{proof}
 Assume by contradiction that
\begin{equation}
\label{eq:range}
  \imm_T \in \{1, 2, \ldots, \card (\V) - 1 \}.
\end{equation}
 Then, there exist~$x, y \in \V$ such that, at time~$T$, the opinion of the individual at~$x$ is at most~$\tau$ while the opinion
 of the individual at~$y$ is more than~$\tau$.
 Because the spatial graph is connected, one of the paths connecting~$x$ and~$y$ must contain an edge~$\{x', y' \} \in \E$ such that
 $$ \ep (\im_T (x')) \leq \tau \quad \hbox{and} \quad \ep (\im_T (y')) > \tau. $$
 This implies that edge~$\{x', y' \}$ is compatible at time~$T$ since
 $$ 1 \leq d (\im_T (x'), \im_T (y')) \leq \max_{b \in V} \,d (\im_T (x'), b) = \ep (\im_T (x')) \leq \tau. $$
 Hence, $\phi (\im_T) \neq 0$, which contradicts~\eqref{eq:fixation-absorbing}, therefore~\eqref{eq:range} is false.
\end{proof} \\ \\
 Using our preliminary results, we can now prove Theorem~\ref{th:imitation}. \\ \\
\begin{proofof}{Theorem~\ref{th:imitation}}
 By Lemmas~\ref{lem:stopping}, \ref{lem:finite} and~\ref{lem:martingale}, the process~$(\imm_t)$ is a bounded martingale and the
 time to fixation is an almost surely finite stopping time, both with respect to the natural filtration of the imitation process.
 In particular, the optional stopping theorem gives
\begin{equation}
\label{eq:imitation-1}
  E (\imm_T) = E (\imm_0) = E (\card \{x \in \V : \ep (\im_0 (x)) \leq \tau \}).
\end{equation}
 It also follows from Lemma~\ref{lem:range} that
\begin{equation}
\label{eq:imitation-2}
  \begin{array}{l}
    E (\imm_T) = \card (\V) \,P (\imm_T = \card (\V)) \vspace*{4pt} \\ \hspace*{50pt}
               = \card (\V) \,P (\ep (\im_T (x)) \leq \tau \ \hbox{for all} \ x \in \V) \end{array}
\end{equation}
 while according to Lemma~\ref{lem:consensus}, we have
\begin{equation}
\label{eq:imitation-3}
  \begin{array}{l}
    P (\im_T \equiv \hbox{cst}) \geq P (\ep (\im_T (x)) \leq \tau \ \hbox{for some} \ x \in \V) \vspace*{4pt} \\ \hspace*{100pt}
                                \geq P (\ep (\im_T (x)) \leq \tau \ \hbox{for all} \ x \in \V). \end{array}
\end{equation}
 Combining~\eqref{eq:imitation-1}--\eqref{eq:imitation-3}, we conclude that
 $$ P (\im_T \equiv \hbox{cst}) \geq \frac{E (\imm_T)}{\card (\V)} = \frac{E (\card \{x \in \V : \ep (\im_0 (x)) \leq \tau \})}{\card (\V)}, $$
 which proves the first part of the theorem.
 To prove the second part of the theorem, assume also that the process starts from the uniform product measure. Then,
 $$ \begin{array}{rcl}
      P (\im_T \equiv \hbox{cst}) & \n \geq \n &
         \displaystyle \frac{E (\card \{x \in \V : \ep (\im_0 (x)) \leq \tau \})}{\card (\V)} \vspace*{8pt} \\ & \n = \n &
         \displaystyle \frac{\card (\V) \,P (\ep (\im_0 (x)) \leq \tau)}{\card (\V)} = P (\ep (U) \leq \tau) \end{array} $$
 where~$U = \uniform (V)$. In particular,
 $$ P (\im_T \equiv \hbox{cst}) \geq \sum_{a \in V} \ \ind \{\ep (a) \leq \tau \} \,P (U = a) = \frac{\card \{a \in V : \ep (a) \leq \tau \}}{\card (V)}. $$
 This completes the proof.
\end{proofof}


\section{Consensus under the attraction rule}
\label{sec:attraction}
 This section is devoted to the proof of Theorem~\ref{th:attraction} and follows the same structure as the previous section.
 As previously mentioned, the key is to define an auxiliary process~$(\att_t)$ that keeps track of the eccentricity of the individuals' opinions.
 Provided the eccentricity inequalities are satisfied, this process is a supermartingale, and the theorem will follow from applying
 the optional stopping theorem to this process stopped at the time to fixation.
 The process is defined from the eccentricity of the individuals' opinions and the radius~$\r$ of the opinion graph by setting
 $$ \att_t = \sum_{x \in \V} \ (\ep (\at_t (x)) - \r) = \sum_{a \in V} \ (\ep (a) - \r) \,\card \{x \in \V : \at_t (x) = a \}. $$
 The process is smaller when the individuals' opinions are closer to the center of the opinion graph, the set of vertices whose
 eccentricity is equal to the radius, whereas it is larger when the opinions are closer to the ``periphery''.
 The fact that~$(\att_t)$ is a supermartingale essentially means that interactions among individuals tend to decrease the eccentricity of their opinions.
 To apply the optional stopping theorem later, we first prove that the process is positive and bounded.
\begin{lemma} --
\label{lem:bounded}
 For all~$t \geq 0$, we have~$0 \leq \att_t \leq \r \card (\V) < \infty$.
\end{lemma}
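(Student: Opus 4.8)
The plan is to verify the two inequalities $0 \leq \att_t$ and $\att_t \leq \r \card(\V)$ separately, both of which follow directly from the definitions of eccentricity, radius and diameter together with the triangle inequality for the geodesic distance; the finiteness is then immediate from finiteness of the spatial graph.

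First I would establish the lower bound. By definition, $\r = \min_{a \in V} \ep(a)$, so $\ep(a) \geq \r$ for every opinion $a \in V$; in particular, $\ep(\at_t(x)) - \r \geq 0$ for every vertex $x \in \V$ and every $t \geq 0$. Summing these nonnegative terms over $x \in \V$ gives $\att_t \geq 0$ at once.

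For the upper bound, the key observation is the classical graph-theoretic inequality $\d \leq 2 \r$. To prove it, let $c \in V$ be a center of the opinion graph, i.e., a vertex with $\ep(c) = \r$ (such a vertex exists because $V$ is finite). Then for any two opinions $a, b \in V$, the triangle inequality yields $d(a, b) \leq d(a, c) + d(c, b) \leq \ep(c) + \ep(c) = 2\r$, and taking the maximum over $a, b \in V$ gives $\d \leq 2\r$. Consequently, for every $x \in \V$ and every $t \geq 0$ we have $\ep(\at_t(x)) \leq \d \leq 2\r$, hence $\ep(\at_t(x)) - \r \leq \r$; summing over the $\card(\V)$ vertices of the spatial graph gives $\att_t \leq \r \card(\V)$, which is finite because $\V$ is finite.

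Combining the two bounds yields $0 \leq \att_t \leq \r \card(\V) < \infty$ for all $t \geq 0$, as claimed. I do not anticipate any real obstacle: the only step that is not entirely immediate from the definitions is the inequality $\d \leq 2\r$, but this is a standard fact requiring only one application of the triangle inequality through a center vertex.
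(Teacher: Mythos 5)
Your proof is correct and follows essentially the same route as the paper: the lower bound from $\ep(a) \geq \r$ by the definition of the radius, and the upper bound by passing the triangle inequality through a center vertex $c$ with $\ep(c) = \r$ to get $\ep(a) \leq 2\r$ (the paper states this directly rather than via $\d \leq 2\r$, an immaterial difference). No gaps.
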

\begin{proof}
 Using that~$\ep (a) \geq \r$ for all~$a \in V$, we get
 $$ \att_t = \sum_{a \in V} \ (\ep (a) - \r) \,\card \{x \in \V : \at_t (x) = a \} \geq 0. $$
 Now, let~$c \in V$ be any vertex in the center of the opinion graph, meaning that~$\ep (c) = \r$.
 Then, it follows from the triangle inequality that, for all opinions~$a, b \in V$,
 $$ d (a, b) \leq d (a, c) + d (c, b) \leq \ep (c) + \ep (c) = 2 \r. $$
 This implies that~$\ep (a) = \max_{b \in V} d (a, b) \leq 2 \r$ for all~$a \in V$ so
 $$ \att_t = \sum_{x \in \V} \ (\ep (\at_t (x)) - \r) \leq \sum_{x \in \V} \ (2 \r - \r) = \r \card (\V), $$
 which is finite because both the opinion and the spatial graphs are finite.
\end{proof} \\ \\
 To prove that the process~$(\att_t)$ is a supermartingale when inequalities~\eqref{eq:ecc-ineq} hold, it is useful to write explicitly its
 infinitesimal variation.
 This is done in the next lemma.
\begin{lemma} --
\label{lem:variation}
 For all times~$t \geq 0$,
 $$ \begin{array}{l} \lim_{s \downarrow 0} \ \displaystyle \frac{E (\att_{t + s} - \att_t \,| \,\F_t)}{s} \vspace*{4pt} \\ \hspace*{20pt} =
    \displaystyle \sum_{\vec{yx} \in \vec{\E}} \bigg(\ind \{0 < d (\at_t (x), \at_t (y)) \leq \tau \}
    \sum_{c \in D (\at_t (x), \at_t (y))} \frac{\ep (c) - \ep (\at_t (x))}{\card D (\at_t (x), \at_t (y))} \bigg). \end{array} $$
\end{lemma}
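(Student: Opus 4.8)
The plan is to recognize $(\at_t)$ as a continuous-time Markov chain on the finite state space consisting of all maps $\V \to V$, and to read off the drift of $(\att_t)$ from the generator of this chain. Write $\at^{x \to c}$ for the configuration obtained from $\at$ by replacing the opinion at vertex~$x$ with~$c$. From the dynamics described by~\eqref{eq:attraction} --- each directed edge of~$\vec\E$ carrying an independent rate-one Poisson clock, a ring of the clock on~$\vec{yx}$ updating the opinion at~$x$ to an opinion drawn uniformly at random from~$D(\at(x),\at(y))$ precisely when~$0 < d(\at(x),\at(y)) \le \tau$ and doing nothing otherwise --- the generator acts on a function~$f$ on configurations by
$$ \mathscr{L} f (\at) = \sum_{\vec{yx} \in \vec\E} \ind \{0 < d (\at (x), \at (y)) \le \tau \} \ \sum_{c \in D (\at (x), \at (y))} \frac{f (\at^{x \to c}) - f (\at)}{\card D (\at (x), \at (y))}, $$
where the inner average accounts for the uniform choice of the new opinion, and the indicator kills the contributions of congruent and discordant edges. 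Here directed edges are indexed so that~$x$ is the vertex being updated; since~$\vec\E$ is symmetric, this ranges over all of~$\vec\E$, matching the convention in~\eqref{eq:martingale-1}.

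The second step is to apply this to~$f(\at) = \att = \sum_{z \in \V}(\ep(\at(z)) - \r)$. Because this functional is a sum of per-vertex terms and~$\at^{x \to c}$ differs from~$\at$ only at~$x$, the increment collapses to $f(\at^{x \to c}) - f(\at) = (\ep(c) - \r) - (\ep(\at(x)) - \r) = \ep(c) - \ep(\at(x))$. Substituting this into the displayed formula for~$\mathscr{L} f$ and evaluating at~$\at = \at_t$ yields exactly the right-hand side of the lemma.

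The last step is to pass from the generator to the conditional infinitesimal drift. For a continuous-time Markov chain on a finite state space one has $\lim_{s \downarrow 0} s^{-1} (E (f(\at_{t+s}) \mid \at_t) - f(\at_t)) = \mathscr{L} f (\at_t)$, the standard justification being that, with probability~$1 - O(s^2)$, at most one Poisson clock rings during~$[t, t+s]$; by the Markov property $E (\cdot \mid \F_t) = E (\cdot \mid \at_t)$, and $\att_{t+s} - \att_t = f(\at_{t+s}) - f(\at_t)$. Combining these observations gives the lemma. I do not expect a genuine obstacle here: the computation is routine, and the only points requiring care are the bookkeeping of which vertex each directed edge updates --- so as to stay consistent with~\eqref{eq:attraction} and~\eqref{eq:martingale-1} --- and the correct weight $1/\card D(\at(x),\at(y))$ coming from the uniform choice of the new opinion.
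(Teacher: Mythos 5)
Your proposal is correct and follows essentially the same route as the paper: the paper likewise decomposes the drift over directed edges with the compatibility indicator and computes the expected increment as the uniform average of $\ep(c) - \ep(\at_t(x))$ over $D(\at_t(x), \at_t(y))$, only without spelling out the generator formalism that you make explicit. Your additional remark justifying the passage from the generator to the infinitesimal conditional drift (at most one clock rings in $[t,t+s]$ up to probability $O(s^2)$) is a harmless elaboration of what the paper takes for granted.
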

\begin{proof}
 Because each directed edge~$\vec{yx} \in \vec{\E}$ becomes active at rate one, which results in the individual at vertex~$x$ updating her opinion
 based on the opinion of the individual at vertex~$y$ provided both individuals are compatible, i.e., $0 < d (\at_t (x), \at_t (y)) \leq \tau$, we have
\begin{equation}
\label{eq:variation-1}
  \begin{array}{l}
  \lim_{s \downarrow 0} \ \displaystyle \frac{E (\att_{t + s} - \att_t \,| \,\F_t)}{s} \vspace*{4pt} \\ \hspace*{20pt} =
  \displaystyle \sum_{\vec{yx} \in \vec{\E}} \ind \{0 < d (\at_t (x), \at_t (y)) \leq \tau \} \ E (\phi_{xy} (\att_t) - \att_t \,| \,\F_t) \end{array}
\end{equation}
 where~$\phi_{xy} (\att_t)$ is the new value of the process~$(\att_t)$ after individual~$x$ updates her opinion based on the opinion of individual~$y$.
 In addition, because the new opinion at vertex~$x$ is chosen uniformly at random from the subset of opinions~$D (\at_t (x), \at_t (y))$, we get
 $$ \begin{array}{l}
    \displaystyle \phi_{xy} (\att_t) - \att_t = \sum_{z \neq x} \ (\ep (\at_t (z)) - \r) + (\ep (U) - \r) \vspace*{-4pt} \\ \hspace*{120pt} -
    \displaystyle \sum_{z \in \V} \ (\ep (\at_t (z)) - \r) = \ep (U) - \ep (\at_t (x)) \end{array} $$
 where~$U = \uniform (D (\at_t (x), \at_t (y)))$.
 It follows that
\begin{equation}
\label{eq:variation-2}
  E (\phi_{xy} (\att_t) - \att_t \,| \,\F_t) = \sum_{c \in D (\at_t (x), \at_t (y))} \frac{\ep (c) - \ep (\at_t (x))}{\card D (\at_t (x), \at_t (y))}.
\end{equation}
 Combining~\eqref{eq:variation-1} and~\eqref{eq:variation-2} gives the result.
\end{proof}
\begin{lemma} --
\label{lem:supermartingale}
 Assume~\eqref{eq:ecc-ineq}. Then, $(\att_t)$ is a supermartingale.
\end{lemma}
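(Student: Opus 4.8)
The plan is to deduce the supermartingale property directly from the drift formula established in Lemma~\ref{lem:variation}, by grouping the contributions of the two directed edges arising from each undirected edge and recognizing the resulting quantity as the left-hand side of the eccentricity inequalities~\eqref{eq:ecc-ineq}.

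First I would rewrite the sum over directed edges appearing in Lemma~\ref{lem:variation} as a sum over undirected edges. For each~$\{x, y \} \in \E$, the two directed edges~$\vec{yx}$ and~$\vec{xy}$ both belong to~$\vec{\E}$, and since the compatibility condition~$0 < d (\at_t (x), \at_t (y)) \leq \tau$ is symmetric in~$x$ and~$y$, the combined contribution of this pair vanishes unless~$\{x, y \}$ is a compatible edge. When~$\{x, y \}$ is compatible, writing~$a = \at_t (x)$ and~$b = \at_t (y)$, the combined contribution is exactly
$$
  \sum_{a' \in D (a, b)} \frac{\ep (a') - \ep (a)}{\card D (a, b)} + \sum_{b' \in D (b, a)} \frac{\ep (b') - \ep (b)}{\card D (b, a)},
$$
which is at most zero by assumption~\eqref{eq:ecc-ineq}. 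Hence the drift computed in Lemma~\ref{lem:variation} is a sum of nonpositive terms, one for each undirected edge, so that
$$
  \lim_{s \downarrow 0} \ \frac{E (\att_{t + s} - \att_t \,| \,\F_t)}{s} \leq 0 \quad \hbox{for every configuration~$\at_t$}.
$$

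Next I would pass from this infinitesimal inequality to the supermartingale property. Since~$(\at_t)$ is a continuous-time Markov chain on a finite state space with bounded total jump rate (at most~$\card (\vec{\E})$), and~$\att_t$ is a bounded function of~$\at_t$ by Lemma~\ref{lem:bounded}, the standard Dynkin argument applies: the process obtained by subtracting from~$\att_t$ the time integral of its drift is a martingale with respect to~$(\F_t)$. Because the drift is everywhere nonpositive, it follows that~$E (\att_t \,| \,\F_s) \leq \att_s$ for all~$s \leq t$, and together with adaptedness and the integrability provided by Lemma~\ref{lem:bounded}, this shows that~$(\att_t)$ is a supermartingale. The only point requiring care is the edge-pairing bookkeeping in the first step — checking that congruent and discordant edges drop out thanks to the compatibility indicator, and that each compatible edge contributes precisely the left-hand side of~\eqref{eq:ecc-ineq} — but no genuine obstacle is expected, since all of the analytic work has already been carried out in Lemmas~\ref{lem:bounded} and~\ref{lem:variation}.
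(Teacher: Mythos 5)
Your proposal is correct and follows essentially the same route as the paper: pair the two directed edges of each undirected edge in the drift formula of Lemma~\ref{lem:variation}, use the symmetry of the opinion distance so that each compatible edge contributes exactly the left-hand side of~\eqref{eq:ecc-ineq}, and conclude the drift is nonpositive. Your explicit Dynkin-type justification for passing from nonpositive drift to the supermartingale property is a point the paper leaves implicit, but it is a welcome elaboration rather than a different argument.
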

\begin{proof}
 Pairing the directed edges
 $$ \vec{xy} \in \vec{\E} \ \ \hbox{and} \ \ \vec{yx} \in \vec{\E} \quad \hbox{for all} \quad \{x, y \} \in \E, $$
 and applying~Lemma~\ref{lem:variation}, we obtain
 $$ \begin{array}{l} \lim_{s \downarrow 0} \ \displaystyle \frac{E (\att_{t + s} - \att_t \,| \,\F_t)}{s} \vspace*{4pt} \\ \hspace*{10pt} = \
    \displaystyle \sum_{\{x, y \} \in \E} \bigg(\ind \{0 < d (\at_t (x), \at_t (y)) \leq \tau \}
                  \sum_{a' \in D (\at_t (x), \at_t (y))} \frac{\ep (a') - \ep (\at_t (x))}{\card D (\at_t (x), \at_t (y))} \vspace*{4pt} \\ \hspace*{80pt} + \
    \displaystyle \ind \{0 < d (\at_t (y), \at_t (x)) \leq \tau \}
                  \sum_{b' \in D (\at_t (y), \at_t (x))} \frac{\ep (b') - \ep (\at_t (y))}{\card D (\at_t (y), \at_t (x))} \bigg). \end{array} $$
 In view of~\eqref{eq:ecc-ineq} and because the opinion graph distance is symmetric, the right-hand side is clearly nonpositive, which implies that
 the process~$(\att_t)$ is a supermartingale.
\end{proof} \\ \\
 The last step before proving the theorem is to show how the probability of consensus under the attraction rule relates to the expected value of~$\att_T$.
\begin{lemma} --
\label{lem:lower-bound}
 For all~$\tau \in [\r, \d)$,
 $$ P (\at_T \equiv \mathrm{cst}) \geq 1 - \frac{1}{\card (\V)} \ \frac{E (\att_T)}{\tau + 1 - \r}. $$
\end{lemma}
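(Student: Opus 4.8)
The plan is to combine Lemma~\ref{lem:consensus} with Markov's inequality applied to the nonnegative random variable~$\att_T$. The starting observation is that the complement of the consensus event is contained in the complement of the event~$A$ from Lemma~\ref{lem:consensus}, so on the event~$\{\at_T \not\equiv \mathrm{cst}\}$ we must have~$\ep (\at_T (x)) > \tau$ for \emph{every} vertex~$x \in \V$. Since eccentricities take integer values and~$\tau \in \N$, the strict inequality~$\ep (\at_T (x)) > \tau$ upgrades to~$\ep (\at_T (x)) \geq \tau + 1$, hence~$\ep (\at_T (x)) - \r \geq \tau + 1 - \r$ for all~$x$. Summing over~$x \in \V$ then gives, on the event~$\{\at_T \not\equiv \mathrm{cst}\}$,
$$ \att_T = \sum_{x \in \V} (\ep (\at_T (x)) - \r) \geq \card (\V) \,(\tau + 1 - \r). $$

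Next I would invoke Lemma~\ref{lem:bounded} to know that~$\att_T \geq 0$, and note that~$\tau + 1 - \r \geq 1 > 0$ because~$\tau \geq \r$ (guaranteed by the hypothesis~$\tau \in [\r, \d)$), so the threshold~$\card (\V)(\tau + 1 - \r)$ is a genuine positive number. The containment of events established above yields
$$ P (\at_T \not\equiv \mathrm{cst}) \leq P \big(\att_T \geq \card (\V)(\tau + 1 - \r)\big), $$
and an application of Markov's inequality to~$\att_T \geq 0$ bounds the right-hand side by~$E (\att_T) / (\card (\V)(\tau + 1 - \r))$.

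Finally, taking complements gives
$$ P (\at_T \equiv \mathrm{cst}) = 1 - P (\at_T \not\equiv \mathrm{cst}) \geq 1 - \frac{1}{\card (\V)} \ \frac{E (\att_T)}{\tau + 1 - \r}, $$
which is the claimed inequality. There is essentially no serious obstacle here; the only point requiring a little care is the integrality argument that converts~$\ep (\at_T (x)) > \tau$ into~$\ep (\at_T (x)) - \r \geq \tau + 1 - \r$, which is exactly what produces the~$\tau + 1 - \r$ (rather than~$\tau - \r$) in the denominator and is what later makes the bounds in Theorems~\ref{th:attraction}--\ref{th:star} nontrivial in the borderline cases. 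Everything else is the optional-stopping input (deferred to the proof of Theorem~\ref{th:attraction}, via Lemmas~\ref{lem:stopping}, \ref{lem:finite}, \ref{lem:bounded} and~\ref{lem:supermartingale}) plus Markov's inequality.
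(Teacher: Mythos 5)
Your proposal is correct and follows essentially the same route as the paper: the paper's proof also uses Lemma~\ref{lem:consensus} together with the observation that on the event where every vertex has eccentricity at least~$\tau + 1$ one has~$\att_T \geq (\tau + 1 - \r) \card (\V)$, and its conditioning on the partition~$\{A, B\}$ combined with~$\att_T \geq 0$ is just the Markov-type estimate you invoke, written out by hand. No gaps; the integrality step~$\ep (\at_T (x)) > \tau \Rightarrow \ep (\at_T (x)) \geq \tau + 1$ and the positivity of~$\tau + 1 - \r$ are handled exactly as in the paper.
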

\begin{proof}
 The key to the proof is to compute the expected value of~$\att_T$ by conditioning on the partition~$\{A, B \}$ where~$A$ and~$B$ are the two events
 $$ \begin{array}{rcl}
      A & \n = \n & \{\ep (\at_T (x)) \leq \tau \ \hbox{for some} \ x \in \V \} \vspace*{4pt} \\
      B & \n = \n & \{\ep (\at_T (x)) \geq \tau + 1 \ \hbox{for all} \ x \in \V \}. \end{array} $$
 Observing that, when~$B$ occurs,
 $$ \att_T = \sum_{x \in \V} \ (\ep (\at_T (x)) - \r) \geq \sum_{x \in \V} \ (\tau + 1 - \r) = (\tau + 1 - \r) \card (\V) $$
 and using that~$\att_T \geq 0$ according to Lemma~\ref{lem:bounded}, we get
 $$ \begin{array}{l}
      E (\att_T) = E (\att_T \,| \,A) P (A) + E (\att_T \,| \,B) P (B) \vspace*{4pt} \\ \hspace*{70pt}
              \geq E (\att_T \,| \,B) P (B) \geq (\tau + 1 - \r) \card (\V) \,P (B). \end{array} $$
 This, and the fact that~$\tau \in [\r, \d)$, give the lower bound
 $$ P (A) = 1 - P (B) \geq 1 - \frac{1}{\card (\V)} \ \frac{E (\att_T)}{\tau + 1 - \r}. $$
 Recalling also that, according to Lemma~\ref{lem:consensus},
 $$ A = \{\ep (\at_T (x)) \leq \tau \ \hbox{for some} \ x \in \V \} \subset \{\at_T \equiv \mathrm{cst} \} $$
 implies the result.
\end{proof} \\ \\
 Applying Lemmas~\ref{lem:bounded}--\ref{lem:lower-bound}, we can now prove Theorem~\ref{th:attraction}. \\ \\
\begin{proofof}{Theorem~\ref{th:attraction}}
 By Lemmas~\ref{lem:bounded} and~\ref{lem:supermartingale}, the process~$(\att_t)$ is a bounded supermartingale, while by
 Lemmas~\ref{lem:stopping} and~\ref{lem:finite}, the time to fixation~$T$ is an almost surely finite stopping time with respect to the
 natural filtration of the opinion model.
 In particular, we may apply the optional stopping theorem to the supermartingale~$(\att_t)$ stopped at time~$T$ to get
 $$ E (\att_T) \leq E (\att_0). $$
 This, together with Lemma~\ref{lem:lower-bound}, implies that
 $$ P (\at_T \equiv \mathrm{cst}) \geq 1 - \frac{1}{\card (\V)} \ \frac{E (\att_T)}{\tau + 1 - \r}
                                   \geq 1 - \frac{1}{\card (\V)} \ \frac{E (\att_0)}{\tau + 1 - \r}. $$
 Finally, recalling the definition of~$\att_t$, we conclude that
 $$ \begin{array}{rcl}
      P (\at_T \equiv \mathrm{cst}) & \n \geq \n &
         \displaystyle 1 - \frac{1}{\card (\V)} \ E \bigg(\sum_{x \in \V} \ \frac{\ep (\at_0 (x)) - \r}{\tau + 1 - \r} \bigg) \vspace*{8pt} \\ & \n = \n &
         \displaystyle 1 - \frac{1}{\card (\V)} \sum_{x \in \V} \bigg(\frac{E (\ep (\at_0 (x))) - \r}{\tau + 1 - \r} \bigg), \end{array} $$
 which proves the first part of the theorem.
 To prove the second part of the theorem, assume also that the process starts from the uniform product measure. Then,
 $$ P (\at_T \equiv \hbox{cst}) \geq
         1 - \frac{E (\ep (\at_0 (x))) - \r}{\tau + 1 - \r} = 1 - \frac{E (\ep (U)) - \r}{\tau + 1 - \r} $$
 where~$U = \uniform (V)$. In particular,
 $$ \begin{array}{rcl}
      P (\at_T \equiv \hbox{cst}) & \n \geq \n &
         \displaystyle 1 - \sum_{a \in V} \bigg(\frac{\ep (a) - \r}{\tau + 1 - \r} \bigg) \,P (U = a) \vspace*{8pt} \\ & \n = \n &
         \displaystyle 1 - \frac{1}{\card (V)} \ \sum_{a \in V} \bigg(\frac{\ep (a) - \r}{\tau + 1 - \r} \bigg). \end{array} $$
 This completes the proof.
\end{proofof}


\section{Application to special opinion graphs}
\label{sec:graphs}
 This section is devoted to the proof of Theorems~\ref{th:lattice}--\ref{th:star} that give lower bounds for the probability of consensus
 under the attraction rule for the three classes of graphs in~Figure~\ref{fig:graphs}.
 The strategy is the same for all three classes of graphs:
 First, we prove that the opinion graph under consideration satisfies the eccentricity inequalities, and then we compute explicitly the
 general lower bound in the second part of Theorem~\ref{th:attraction} for this specific graph.
 To begin with, we assume that the set of opinions is given by the~$n$-dimensional integer lattice
 $$ V = \bigg(\prod_{i = 1}^n \ [- L_i, L_i] \bigg) \cap \Z^n \quad \hbox{where} \quad L_i \in \N^*. $$
\begin{lemma} --
\label{lem:lattice-ecc}
 The lattice satisfies the eccentricity inequalities~\eqref{eq:ecc-ineq}.
\end{lemma}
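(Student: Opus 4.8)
The plan is to exploit the product structure of the graph metric on the box. Recall that on $V = \big(\prod_{i=1}^n [-L_i,L_i]\big)\cap\Z^n$ the graph distance is $d(a,b)=\sum_{i=1}^n |a_i-b_i|$. First I would record two elementary facts. \emph{(i)} For every $a\in V$ the farthest opinion from $a$ is a corner of the box, so
$$\ep(a)=\sum_{i=1}^n \max\big(|a_i-L_i|,|a_i+L_i|\big)=\sum_{i=1}^n\big(L_i+|a_i|\big)=\r+\sum_{i=1}^n|a_i|,$$
where $\r=L_1+\cdots+L_n$ (the minimum, attained at the origin). \emph{(ii)} Writing $S=\{i:a_i\neq b_i\}$ and $s_i=\sgn(b_i-a_i)$ for $i\in S$, a lattice point $a'$ lies in $D(a,b)$ if and only if $a'=a+s_i e_i$ for some $i\in S$ (moving one coordinate one step toward $b$; such a point stays in the box since $a_i,b_i\in[-L_i,L_i]$), and symmetrically $D(b,a)=\{b-s_i e_i:i\in S\}$. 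In particular $\card D(a,b)=\card D(b,a)=\card S$.

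Combining \emph{(i)} and \emph{(ii)}, for $i\in S$ the opinion $a+s_i e_i\in D(a,b)$ satisfies $\ep(a+s_i e_i)-\ep(a)=|a_i+s_i|-|a_i|$, and $\ep(b-s_i e_i)-\ep(b)=|b_i-s_i|-|b_i|$. Hence the left-hand side of~\eqref{eq:ecc-ineq} becomes
$$\frac{1}{\card S}\sum_{i\in S}\left[\left(|a_i+s_i|-|a_i|\right)+\left(|b_i-s_i|-|b_i|\right)\right],$$
so it suffices to show each bracketed summand is $\leq 0$. By the symmetry $a\leftrightarrow b$ we may assume $s_i=+1$, i.e. $a_i<b_i$, and must check $\big(|a_i+1|-|a_i|\big)+\big(|b_i-1|-|b_i|\big)\leq 0$. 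Each difference is $\pm1$: if $a_i\geq 0$ then $a_i<b_i$ forces $b_i\geq 1$, so the first difference is $+1$ and the second is $-1$, giving $0$; if $a_i\leq -1$ the first difference is $-1$ while the second is at most $+1$, so the sum is $\leq 0$. This proves the per-coordinate inequality, hence~\eqref{eq:ecc-ineq}.

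I do not expect a genuine obstacle here: the work is entirely in getting the two structural facts right, namely the formula $\ep(a)=\r+\sum_i|a_i|$ (which rests on the box being the product of intervals, so that the farthest point is a corner) and the explicit description of $D(a,b)$ and $D(b,a)$ as single-coordinate moves with equal cardinality $\card S$. Once those are in place, the inequality collapses to the one-dimensional case analysis above, and the same bookkeeping will be reused (with $|a_i|$ replaced by suitable coordinatewise eccentricity contributions) in the proofs for trees and star-like graphs.
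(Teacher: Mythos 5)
Your proposal is correct and follows essentially the same route as the paper's proof: the eccentricity formula $\ep(a)=\r+d(0,a)$, the identification of $D(a,b)$ and $D(b,a)$ as single-coordinate moves indexed by the set of disagreeing coordinates (with equal cardinality), and the per-coordinate case analysis showing the paired differences $(|a_i+s_i|-|a_i|)+(|b_i-s_i|-|b_i|)$ sum to $0$ or $-2$. The only differences are cosmetic (your WLOG $s_i=+1$ versus the paper's explicit listing of the sign conditions).
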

\begin{proof}
 For all~$a = (a_1, \ldots, a_n), b = (b_1, \ldots, b_n) \in V$, $a \neq b$, define
 $$ \Lambda = \Lambda (a, b) = \Lambda (b, a) = \{i = 1, 2, \ldots, n : a_i \neq b_i \} \neq \varnothing. $$
 The sets~$D (a, b)$ and~$D (b, a)$ are given respectively by
 $$ \begin{array}{rclcl}
      D (a, b) & \n = \n & \{a^i : i \in \Lambda \} & \hbox{where} & a^i = a + \sgn (b_i - a_i) \,e_i \vspace*{4pt} \\
      D (b, a) & \n = \n & \{b^i : i \in \Lambda \} & \hbox{where} & b^i = b + \sgn (a_i - b_i) \,e_i \end{array} $$
 where~$\sgn$ is the sign function and where~$e_i$ is the~$i$th unit vector in~$\Z^n$.
 We refer the reader to~Figure~\ref{fig:lattice} for a picture.
\begin{figure}[t]
\centering
\scalebox{0.50}{\input{lattice.pstex_t}}
\caption{\upshape{Pictures related to the proof of Lemma~\ref{lem:lattice-ecc}.}}
\label{fig:lattice}
\end{figure}
 Note that, on the lattice, the eccentricity of opinion~$a$ is
 $$ \ep (a) = d (0, a) + \r \quad \hbox{where} \quad \r = \ep (0) = L_1 + L_2 + \cdots + L_n. $$
 In particular, for all~$i \in \Lambda$, we have
 $$ \ep (a^i) - \ep (a) = d (0, a^i) - d (0, a) = |a_i + \sgn (b_i - a_i)| - |a_i|. $$
 Considering all the possible orderings of~$0, a_i$ and~$b_i$, we deduce that
\begin{equation}
\label{eq:lattice-ecc-1}
  \ep (a^i) - \ep (a) = \left\{\begin{array}{rl} + 1 & \hbox{if} \ b_i < a_i \leq 0 \ \hbox{or} \ 0 \leq a_i < b_i \vspace*{2pt} \\
                                                 - 1 & \hbox{otherwise}. \end{array} \right.
\end{equation}
 By symmetry, we have
\begin{equation}
\label{eq:lattice-ecc-2}
  \ep (b^i) - \ep (b) = \left\{\begin{array}{rl} + 1 & \hbox{if} \ a_i < b_i \leq 0 \ \hbox{or} \ 0 \leq b_i < a_i \vspace*{2pt} \\
                                                 - 1 & \hbox{otherwise}. \end{array} \right.
\end{equation}
 Because the two conditions
 $$ (b_i < a_i \leq 0 \ \ \hbox{or} \ \ 0 \leq a_i < b_i) \quad \hbox{and} \quad (a_i < b_i \leq 0 \ \ \hbox{or} \ \ 0 \leq b_i < a_i) $$
 are incompatible, the left-hand sides of~\eqref{eq:lattice-ecc-1} and~\eqref{eq:lattice-ecc-2} cannot be simultaneously positive, from
 which it follows that they add up to either 0 or $-2$, therefore
 $$ (\ep (a^i) - \ep (a)) + (\ep (b^i) - \ep (b)) \leq 0 \quad \hbox{for all} \quad i \in \Lambda. $$
 Using also that~$\card D (a, b) = \card D (b, a) = \card (\Lambda)$, we conclude that
 $$ \begin{array}{l}
    \displaystyle \sum_{a' \in D (a, b)} \frac{\ep (a') - \ep (a)}{\card D (a, b)} +
                  \sum_{b' \in D (b, a)} \frac{\ep (b') - \ep (b)}{\card D (b, a)} \vspace*{4pt} \\ \hspace*{20pt} = \
    \displaystyle \sum_{i \in \Lambda} \ \frac{\ep (a^i) - \ep (a)}{\card (\Lambda)} +
                  \sum_{i \in \Lambda} \ \frac{\ep (b^i) - \ep (b)}{\card (\Lambda)} =
    \displaystyle \sum_{i \in \Lambda} \ \frac{(\ep (a^i) - \ep (a)) + (\ep (b^i) - \ep (b))}{\card (\Lambda)} \leq 0. \end{array} $$
 This completes the proof.
\end{proof} \\ \\
 Using Theorem~\ref{th:attraction} and Lemma~\ref{lem:lattice-ecc}, we can now prove Theorem~\ref{th:lattice}. \\ \\
\begin{proofof}{Theorem~\ref{th:lattice}}
 According to Lemma~\ref{lem:lattice-ecc}, the lattice satisfies the eccentricity inequalities~\eqref{eq:ecc-ineq}, therefore
 we may apply Theorem~\ref{th:attraction} to get
\begin{equation}
\label{eq:lattice-1}
  P (\at_T \equiv \mathrm{cst}) \geq 1 - \frac{1}{\card (V)} \ \sum_{a \in V} \bigg(\frac{\ep (a) - \r}{\tau + 1 - \r} \bigg).
\end{equation}
 Now, let~$U = \uniform (V)$ and observe that
 $$ U_i = \uniform ([- L_i, L_i] \cap \Z) \quad \hbox{for all} \quad i = 1, 2, \ldots, n. $$
 This and~$\ep (a) - \r = d (0, a) = |a_1| + \cdots + |a_n|$ imply that
\begin{equation}
\label{eq:lattice-2}
  \begin{array}{rcl}
  \displaystyle \frac{1}{\card (V)} \ \sum_{a \in V} \ (\ep (a) - \r) & \n = \n &
  \displaystyle E (\ep (U)) - \r = E (d (0, U)) = \sum_{i = 1}^n \,E |U_i| \vspace*{4pt} \\ & \n = \n &
  \displaystyle \sum_{i = 1}^n \ \frac{2 (1 + 2 + \cdots + L_i)}{2 L_i + 1} = \sum_{i = 1}^n \ \frac{L_i (L_i + 1)}{2L_i + 1}. \end{array}
\end{equation}
 Combining~\eqref{eq:lattice-1} and~\eqref{eq:lattice-2} gives
 $$ P (\at_T \equiv \mathrm{cst}) \geq 1 - \bigg(\frac{1}{\tau + 1 - \r} \bigg) \sum_{i = 1}^n \ \frac{L_i (L_i + 1)}{2L_i + 1} $$
 as desired.
\end{proofof} \\ \\
 We now assume that the opinion graph is the rooted tree with degree~$n$ and radius~$r$.
 As for the lattice, we first show that the tree satisfies the eccentricity inequalities.
\begin{lemma} --
\label{lem:tree-ecc}
 The rooted tree with degree~$n$ and radius~$r$ satisfies~\eqref{eq:ecc-ineq}.
\end{lemma}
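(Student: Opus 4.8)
The plan is to exploit the fact that in a tree all geodesics are unique, which makes both sets $D(a,b)$ and $D(b,a)$ singletons. Indeed, among the neighbors of $a$ exactly one, call it $a'$, lies one step closer to $b$ along the unique path from $a$ to $b$, so $D(a,b) = \{a'\}$ and $\card D(a,b) = 1$; likewise $D(b,a) = \{b'\}$ with $b'$ the first vertex on the path from $b$ to $a$. Consequently the eccentricity inequality~\eqref{eq:ecc-ineq} collapses to the single inequality
$$ (\ep(a') - \ep(a)) + (\ep(b') - \ep(b)) \leq 0 \quad \hbox{for all} \quad a \neq b, $$
and the whole proof reduces to understanding how the eccentricity changes along a single edge of the tree.

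Next I would compute the eccentricity explicitly. Writing $o$ for the root, I claim that $\ep(v) = d(o,v) + r$ for every vertex $v$, so that in particular the radius equals $r$, attained only at $o$. To see this, observe that the vertex farthest from $v$ may be taken to be a leaf $\ell$, all leaves being at depth $r$; if $m$ denotes the deepest common ancestor of $v$ and $\ell$, then $d(v,\ell) = (d(o,v) - d(o,m)) + (r - d(o,m)) = d(o,v) + r - 2\,d(o,m)$, which is maximized by taking $m = o$, and a leaf $\ell$ realizing $m = o$ exists because $n \geq 2$ guarantees a child subtree of the root not containing $v$ (and any leaf works when $v = o$). With this formula in hand, for adjacent vertices $u$ and $v$ we get $\ep(u) - \ep(v) = d(o,u) - d(o,v)$, which equals $+1$ when $u$ is a child of $v$ and $-1$ when $u$ is the parent of $v$.

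Finally I would run the case analysis according to the relative position of $a$ and $b$, letting $m = \mathrm{lca}(a,b)$. If $a = m$, i.e. $a$ is an ancestor of $b$, then the first step from $a$ toward $b$ goes downward so $\ep(a') - \ep(a) = +1$, while the first step from $b$ toward $a$ goes upward so $\ep(b') - \ep(b) = -1$; the two terms cancel. The case $b = m$ is symmetric. If neither $a$ nor $b$ equals $m$, then $a$ and $b$ lie strictly below $m$ in distinct child subtrees of $m$, so both first steps go upward and the sum equals $-2$. In every case the displayed inequality holds, which by the singleton reduction above yields~\eqref{eq:ecc-ineq}. There is no serious obstacle here: the only point needing a little care is the verification of the formula $\ep(v) = d(o,v) + r$, in particular handling the root and invoking the hypothesis $n \geq 2$, after which the argument is a short case check entirely parallel to the one used for the lattice in Lemma~\ref{lem:lattice-ecc}.
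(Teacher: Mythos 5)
Your proof is correct and follows essentially the same route as the paper: the uniqueness of geodesics in the tree makes $D(a,b)$ and $D(b,a)$ singletons, reducing~\eqref{eq:ecc-ineq} to a single-edge inequality, and the same three-case analysis (one vertex an ancestor of the other, or neither) yields a sum of $0$ or $-2$. The only difference is that you verify the formula $\ep(v) = d(o,v) + r$ explicitly (handling the root and using $n \geq 2$), which the paper uses without comment; this is a careful but minor addition rather than a different approach.
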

\begin{proof}
 Letting~$a \neq b$ be two opinions, there is a unique path connecting~$a$ and~$b$, from which it follows that the sets~$D (a, b)$ and~$D (b, a)$
 are singletons, and we write
\begin{equation}
\label{eq:tree-ecc-1}
  D (a, b) = \{a'' \} \quad \hbox{and} \quad D (b, a) = \{b'' \}.
\end{equation}
 Then, we distinguish three cases.
 Assume first that~$a$ is a descendant of~$b$, meaning that the unique path going from the root~0 to vertex~$a$ crosses~$b$.
 In this case, we have
\begin{equation}
\label{eq:tree-ecc-2}
  \begin{array}{rcl}
  \ep (a'') = d (0, a'') + r & \n = \n & (d (0, a) - 1) + r = \ep (a) - 1 \vspace*{4pt} \\
  \ep (b'') = d (0, b'') + r & \n = \n & (d (0, b) + 1) + r = \ep (b) + 1. \end{array}
\end{equation}
 We refer the reader to the left-hand side of Figure~\ref{fig:tree} for an illustration of this case.
\begin{figure}[t]
\centering
\scalebox{0.50}{\input{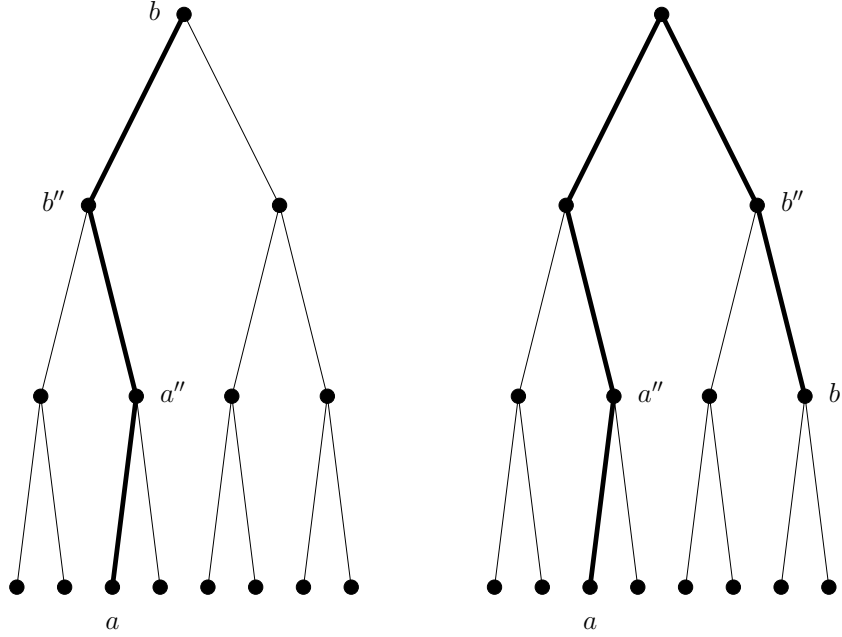}}
\caption{\upshape{Pictures related to the proof of Lemma~\ref{lem:tree-ecc}.}}
\label{fig:tree}
\end{figure}
 By symmetry, when opinion~$b$ is a descendant of opinion~$a$,
\begin{equation}
\label{eq:tree-ecc-3}
  \ep (a'') = \ep (a) + 1 \quad \hbox{and} \quad \ep (b'') = \ep (b) - 1.
\end{equation}
 Finally, when neither vertex is a descendant of the other vertex,
\begin{equation}
\label{eq:tree-ecc-4}
  \begin{array}{rcl}
  \ep (a'') = d (0, a'') + r & \n = \n & (d (0, a) - 1) + r = \ep (a) - 1 \vspace*{4pt} \\
  \ep (b'') = d (0, b'') + r & \n = \n & (d (0, b) - 1) + r = \ep (b) - 1. \end{array}
\end{equation}
 We refer the reader to the right-hand side of Figure~\ref{fig:tree} for an illustration of this case.
 Finally, combining~\eqref{eq:tree-ecc-1}--\eqref{eq:tree-ecc-4}, we conclude that, in any case,
 $$ \sum_{a' \in D (a, b)} \frac{\ep (a') - \ep (a)}{\card D (a, b)} + \sum_{b' \in D (b, a)} \frac{\ep (b') - \ep (b)}{\card D (b, a)} =
    (\ep (a'') - \ep (a)) + (\ep (b'') - \ep (b)) \leq 0 $$
 showing that the eccentricity inequalities~\eqref{eq:ecc-ineq} hold.
\end{proof} \\ \\
 We point out that, even though the proof above only applies to finite regular rooted trees, it can be extended to show that any finite tree
 satisfies the eccentricity inequalities.
 To see this, we observe that the center of the tree, defined as
 $$ C = \{c \in V : \ep (c) = \r \}, $$
 reduces to one or two vertices.
 In either case, one can extend the argument in the proof of Lemma~\ref{lem:tree-ecc} by designating the center as the root, and using that
 $$ \ep (a) = d (a, C) + \r = \min_{c \in C} \,d (a, c) + \r \quad \hbox{for all} \quad a \in V. $$
 Using Theorem~\ref{th:attraction} and Lemma~\ref{lem:tree-ecc}, we can now prove Theorem~\ref{th:tree}. \\ \\
\begin{proofof}{Theorem~\ref{th:tree}}
 According to Lemma~\ref{lem:tree-ecc}, the tree satisfies the eccentricity inequalities~\eqref{eq:ecc-ineq}, therefore
 we may apply Theorem~\ref{th:attraction} to get
\begin{equation}
\label{eq:tree-1}
  P (\at_T \equiv \mathrm{cst}) \geq 1 - \frac{1}{\card (V)} \ \sum_{a \in V} \bigg(\frac{\ep (a) - \r}{\tau + 1 - \r} \bigg)
\end{equation}
 just like in the proof of Theorem~\ref{th:lattice}. Using that
 $$ \begin{array}{rcl}
                    \ep (a) - \r = d (0, a) & \hbox{for all} & a \in V \vspace*{4pt} \\
    \card \{a \in V : d (0, a) = k \} = n^k & \hbox{for all} & k = 0, 1, \ldots, \r, \end{array} $$
 we obtain that
\begin{equation}
\label{eq:tree-2}
 \begin{array}{rcl}
 \displaystyle \frac{1}{\card (V)} \ \sum_{a \in V} \ (\ep (a) - \r) & \n = \n &
 \displaystyle \frac{1}{\card (V)} \ \sum_{a \in V} \ d (0, a) =
 \displaystyle \frac{\displaystyle \sum_{k = 0}^r \,k n^k}{\displaystyle \sum_{k = 0}^r \,n^k} \vspace*{-4pt} \\ & \n = \n &
 \displaystyle  n \ \frac{\displaystyle \frac{\partial}{\partial n} \bigg(\frac{1 - n^{r + 1}}{1 - n} \bigg)}{\displaystyle \bigg(\frac{1 - n^{r + 1}}{1 - n} \bigg)} =
 \displaystyle \frac{n (r n^{r + 1} - (r + 1) \,n^r + 1)}{(1 - n)(1 - n^{r + 1})}. \end{array}
\end{equation}
 Combining~\eqref{eq:tree-1} and~\eqref{eq:tree-2}, we conclude that
 $$ P (\at_T \equiv \mathrm{cst}) \geq 1 - \bigg(\frac{1}{\tau + 1 - r} \bigg) \ \frac{n (r n^{r + 1} - (r + 1) \,n^r + 1)}{(1 - n)(1 - n^{r + 1})}. $$
 This completes the proof.
\end{proofof} \\ \\
 Finally, we assume that the opinion graph is the star-like graph with~$n$ branches and radius~$r$ depicted on the right-hand side of Figure~\ref{fig:graphs}.
\begin{lemma} --
\label{lem:star-ecc}
 The star-like graph satisfies the eccentricity inequalities~\eqref{eq:ecc-ineq}.
\end{lemma}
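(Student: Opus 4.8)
The plan is to exploit the fact that the star-like graph is a finite tree, so that the argument used for Lemma~\ref{lem:tree-ecc}, together with the remark following it about arbitrary finite trees, applies with essentially no change; I would still write out the handful of lines needed here. Fix two distinct opinions~$a$ and~$b$. Since the graph is a tree, there is a unique path joining~$a$ and~$b$, so the sets~$D (a, b)$ and~$D (b, a)$ are singletons, say~$D (a, b) = \{a'' \}$ and~$D (b, a) = \{b'' \}$, exactly as in~\eqref{eq:tree-ecc-1}. The eccentricity inequality~\eqref{eq:ecc-ineq} for this pair then reduces to
$$ (\ep (a'') - \ep (a)) + (\ep (b'') - \ep (b)) \leq 0. $$

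The key preliminary fact to record is that, writing~$0$ for the center of the star-like graph (the unique vertex of eccentricity~$\r = r$), one has~$\ep (a) = d (0, a) + r$ for every vertex~$a$. Indeed, the vertex farthest from~$a$ is a leaf lying on a branch other than the one containing~$a$ --- here one uses that there are~$n \geq 2$ branches --- and such a leaf is at distance~$d (0, a) + r$ from~$a$, while every vertex on the branch of~$a$ is at distance at most~$\max (d (0, a), r) \leq d (0, a) + r$. Thus, up to the additive constant~$r$, the eccentricity is just the depth function~$d (0, \cdot)$ obtained by rooting the tree at its center~$0$.

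Equipped with this, I would run the same three-case split as in the proof of Lemma~\ref{lem:tree-ecc}, with the center~$0$ playing the role of the root. First, if~$a$ is a descendant of~$b$, meaning that the unique path from~$0$ to~$a$ passes through~$b$ (which forces~$a$ and~$b$ to lie on the same branch), then~$a''$ is the neighbor of~$a$ one step closer to~$0$ and~$b''$ is the neighbor of~$b$ one step farther from~$0$, so~$\ep (a'') = \ep (a) - 1$ and~$\ep (b'') = \ep (b) + 1$, and the displayed sum equals~$0$. Second, the case where~$b$ is a descendant of~$a$ is symmetric and again gives~$0$. Third, if neither vertex is a descendant of the other --- which for the star-like graph means precisely that~$a$ and~$b$ lie on distinct branches, so that the path between them passes through~$0$ --- then both~$a''$ and~$b''$ are one step closer to~$0$, giving~$\ep (a'') = \ep (a) - 1$ and~$\ep (b'') = \ep (b) - 1$, and the sum equals~$-2$. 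In every case the left-hand side is~$\leq 0$, which is~\eqref{eq:ecc-ineq}, and applying this to all pairs~$a, b \in V$ proves the lemma.

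There is no genuinely hard step here: the combinatorics are identical to those for the regular rooted tree. The only points that need care are the identity~$\ep (a) = d (0, a) + r$ (which is where the hypothesis~$n \geq 2$ enters, guaranteeing that a far leaf on another branch is always available) and the check that the three cases truly exhaust all possibilities --- in particular the degenerate subcases where~$a$ or~$b$ is the center itself, which fall under the ``descendant'' cases and are covered by the same formulas. So the main, and rather mild, obstacle is simply bookkeeping the case analysis correctly.
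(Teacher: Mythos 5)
Your proof is correct and follows essentially the same route as the paper: the sets~$D(a,b)$, $D(b,a)$ are singletons by uniqueness of paths, and the same three-case split (one opinion on the center-side of the other, or the path between them passing through the center) yields~$(\ep(a'')-\ep(a))+(\ep(b'')-\ep(b))\leq 0$ in every case. Your explicit justification of~$\ep(a)=d(0,a)+r$ (using~$n\geq 2$) and the remark about the degenerate subcases are details the paper leaves implicit, but they do not change the argument.
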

\begin{proof}
 This is similar to the proof of Lemma~\ref{lem:tree-ecc}.
 Because there is a unique path connecting any two opinions, say~$a$ and~$b$, we again have
 $$ D (a, b) = \{a'' \} \quad \hbox{and} \quad D (b, a) = \{b'' \} $$
 and we distinguish three cases:
\begin{enumerate}
 \item When the path going from the center to~$a$ goes trough~$b$,
       $$ \ep (a'') = \ep (a) - 1 \quad \hbox{and} \quad \ep (b'') = \ep (b) + 1. $$
 \item When the path going from the center to~$b$ goes trough~$a$,
       $$ \ep (a'') = \ep (a) + 1 \quad \hbox{and} \quad \ep (b'') = \ep (b) - 1. $$
 \item In all other cases, neither~$a$ nor~$b$ can be the center, and the unique path connecting~$a$ and~$b$ must go through the center.
       In particular, we get
       $$ \ep (a'') = \ep (a) - 1 \quad \hbox{and} \quad \ep (b'') = \ep (b) - 1. $$
\end{enumerate}
 In all three cases, we have~$\ep (a'') + \ep (b'') \leq \ep (a) + \ep (b)$ therefore
 $$ \sum_{a' \in D (a, b)} \frac{\ep (a') - \ep (a)}{\card D (a, b)} + \sum_{b' \in D (b, a)} \frac{\ep (b') - \ep (b)}{\card D (b, a)} =
    (\ep (a'') - \ep (a)) + (\ep (b'') - \ep (b)) \leq 0 $$
 for all~$a, b \in V$, $a \neq b$.
 This completes the proof.
\end{proof} \\ \\
\begin{proofof}{Theorem~\ref{th:star}}
 By Lemma~\ref{lem:star-ecc} and Theorem~\ref{th:attraction},
\begin{equation}
\label{eq:star-1}
  P (\at_T \equiv \mathrm{cst}) \geq 1 - \frac{1}{\card (V)} \ \sum_{a \in V} \bigg(\frac{\ep (a) - \r}{\tau + 1 - \r} \bigg).
\end{equation}
 In addition, for the star-like graph, we have
 $$ \begin{array}{rcl}
    \ep (a) - \r = d (0, a) & \hbox{for all} & a \in V \vspace*{4pt} \\
    \card \{a \in V : d (0, a) = k \} = n & \hbox{for all} & k = 1, 2, \ldots, \r. \end{array} $$
 It follows that
\begin{equation}
\label{eq:star-2}
 \begin{array}{rcl}
 \displaystyle \frac{1}{\card (V)} \ \sum_{a \in V} \ (\ep (a) - \r) & \n = \n &
 \displaystyle \frac{1}{\card (V)} \ \sum_{a \in V} \ d (0, a)  \vspace*{4pt} \\ & \n = \n &
 \displaystyle \frac{(1 + 2 + \cdots + r) \,n}{1 + rn} = \frac{r (r + 1) \,n}{2 (1 + rn)}. \end{array}
\end{equation}
 Combining~\eqref{eq:star-1} and~\eqref{eq:star-2}, we conclude that
 $$ P (\at_T \equiv \mathrm{cst}) \geq 1 - \bigg(\frac{1}{\tau + 1 - r} \bigg) \ \frac{r (r + 1) \,n}{2 (1 + rn)}. $$
 This proves the theorem.
\end{proofof}


\end{document}